\definecolor{darkgreen}{rgb}{0,0.30,0} 
\newcolumntype{L}{>{$}l<{$}} % math-mode version of "l" column type
\newcolumntype{C}{>{$}c<{$}} % math-mode version of "l" column type
\newcommand{\Addresses}{{% additional braces for segregating \footnotesize
  \bigskip
  \footnotesize
 \ \ \ T. Brazelton, \textsc{Department of Mathematics, University of Pennsylvania}\par\nopagebreak
  \textit{E-mail address}: \texttt{tbraz@math.upenn.edu}

  \medskip
  
  J. Harrington, \textsc{Department of Mathematics, Cedar Crest College}\par\nopagebreak
  \textit{E-mail address}: \texttt{joshua.harrington@cedarcrest.edu}

  \medskip  
  M. Litman, \textsc{Department of Mathematics, University of California, Davis}\par\nopagebreak
  \textit{E-mail address}: \texttt{mclitman@ucdavis.edu}

  \medskip

  T.W.H. Wong, \textsc{Department of Mathematics, Kutztown University}\par\nopagebreak
  \textit{E-mail address}: \texttt{wong@kutztown.edu}
}}
\newcommand{\Z}{\mathbb{Z}}
\newcommand{\fR}{\mathfrak{R}}
\newcommand{\F}{\mathbb{F}}
\renewcommand{\epsilon}{\varepsilon}
\renewcommand{\emptyset}{\varnothing}
\newcommand{\floor}[1]{\left\lfloor #1 \right\rfloor}
\newcommand{\OB}[1]{\left\{ #1 \right\}}
\newcommand{\Hyp}{\mathcal{H}}
\newcommand{\Ell}{\mathcal{E}}
\newcommand{\Para}{\mathcal{P}}
\newcommand{\im}{\text{im}}
\newcommand{\legendre}[2]{\chi_{#2}({#1})}
\def\makeautorefname#1#2{\expandafter\def\csname#1autorefname\endcsname{#2}}
\theoremstyle{definition}
\newtheorem{theorem}{Theorem}[section]
\newtheorem{assumption}[theorem]{Assumption}
\newtheorem{corollary}[theorem]{Corollary}
\newtheorem{definition}[theorem]{Definition}
\newtheorem{example}[theorem]{Example}
\newtheorem{lemma}[theorem]{Lemma}
\newtheorem{notation}[theorem]{Notation}
\newtheorem{proposition}[theorem]{Proposition}
\newtheorem{remark}[theorem]{Remark}
\let\c@corollary=\c@theorem
\let\c@proposition=\c@theorem
\let\c@lemma=\c@theorem
\let\c@conjecture=\c@theorem
\let\c@definition=\c@theorem
\let\c@example=\c@theorem
\let\c@remark=\c@theorem
\let\c@notation=\c@theorem
\author[Brazelton]{Thomas Brazelton}
\author[Harrington]{Joshua Harrington}
\author[Litman]{Matthew Litman}
\author[Wong]{Tony W.H. Wong}
\title{Residue Sums of Dickson Polynomials over finite fields}
\date{\today}
\begin{document}
\maketitle

\begin{abstract}
    Given a polynomial with integral coefficients, one can inquire about the possible residues it can take in its image modulo a prime $p$. The sum over the distinct residues can sometimes be computed independent of the prime $p$; for example, Gauss showed that the sum over quadratic residues vanishes modulo a prime. In this paper we provide a closed form for the sum over distinct residues in the image of Dickson polynomials of arbitrary degree over finite fields of odd characteristic, and prove a complete characterization of the size of the image set. Our result provides the first non-trivial classification of such a sum for a family of polynomials of unbounded degree.\\\\
    \textit{MSC}: 11B39, 11C08.\\
    \textit{Keywords}: Dickson polynomials, Lucas polynomials, polynomial residues, roots of unity. 
\end{abstract}

\tableofcontents

\section{Introduction}

For an integral polynomial $f$ and an odd prime $p$, we denote by $\fR_p(f)$ the image set of $f$ in $\F_p$, the finite field of order $p$.  Many properties of $\fR_p(f)$ have been well-studied if $f$ is of small degree. For example, it is well-known that $|\fR_p(f)|=(p+1)/2$ if $f$ is quadratic. Following the work of von Sterneck \cite{vonsterneck} and Kantor \cite{kantor} in the early 20th century, the size of the image set of a cubic polynomial was determined: if $p>3$ is prime, then
\begin{align*}
    \left| \fR_p(x^3 + ax^2 + bx + c) \right| &= \begin{cases}
    p & a^2 - 3b=0,\ p\equiv -1 \pmod{3}; \\
    \frac{p+2}{3} & a^2-3b = 0,\ p\equiv 1\pmod{3}; \\
    \frac{2p-1}{3} & a^2-3b\ne 0,\ p\equiv -1 \pmod{3}; \\
    \frac{2p+1}{3} & a^2-3b\ne 0,\ p\equiv 1\pmod{3}.
    \end{cases}
\end{align*}
For quartic and higher degree polynomials, not as much is known. Sun \cite{sun} investigated $|\fR_p(f)|$ for quartic polynomials $f$ with no cubic term, Chou, Gomez-Calderon, and Mullen \cite{CGM} established $|\fR_p(f)|$ for Dickson polynomials $f$ (we will discuss this in greater detail later), and Cusick \cite{cusick} investigated an infinite family of polynomials over a finite field of characteristic $2$. Uchiyama \cite{uchiyama} provided sufficient conditions for a polynomial $f$ to satisfy the lower bound $|\fR_p(f)|>p/2$, but noted that this does not hold in general. Just a few years later, Birch and Swinnerton-Dyer made Uchiyama's bound more precise \cite{BSD}, and this was further improved upon by Voloch \cite{voloch}. Probabilistic methods over finite fields allow one to ask about the ``average'' value of $|\fR_p(f)|$, varying over polynomials of degree $n$, and this has proven to be a fruitful direction of research (see for example \cite{cohen,knopfmacher}), however ascertaining the value $|\fR_p(f)|$ for arbitrary polynomials still appears intractable at the time of writing.

Another interesting property of $\fR_p(f)$ is the \emph{residue sum}, denoted by $S_p(f)$, defined to be the sum of the elements of $\fR_p(f)$ in $\F_p$. Gauss \cite{Gauss} first proved that $S_p(x^2)=0$. Considering $f(x)=x^2$ as a special case of polygonal numbers, it is natural to investigate the residue sum of triangular numbers modulo $p$, which was shown to be $-16^{-1}$ in $\F_p$ by Stetson \cite{Stetson} in 1904. In other words, Stetson showed that 
\[
    S_p\left(\frac{x(x+1)}{2}\right)=-\frac{1}{16}.
\]
This result was later generalized by Gross, Harrington, and Minott \cite{GHM}, who computed for $a\not\equiv 0\pmod{p}$ that
\[
    S_p\left(ax^2+bx+c\right)=-\frac{b^2-4ac}{8a}.
\]
We observe that the residue sums of quadratic polynomials are invariant for all odd primes $p$. Such is not true if $f$ has a higher degree. Finch-Smith, Harrington, and Wong \cite{FHW} showed that if $a\not\equiv 0\pmod{p}$ where $p>3$ is an odd prime, then 
$$S_p\left(ax^3+bx^2+cx+d\right)=\begin{cases} 
\dfrac{27a^2d-9abc+2b^3}{81a^2}\quad&\text{if }b^2\neq3ac\text{ and }p \equiv 1 \pmod{3};\vspace{3pt}\\
-\dfrac{27a^2d-9abc+2b^3}{81a^2}&\text{if }b^2\neq3ac\text{ and }p \equiv -1 \pmod{3};\vspace{3pt}\\
\dfrac{2\left(27a^2d-9abc+2b^3\right)}{81a^2}\quad&\text{if }b^2=3ac\text{ and }p \equiv 1 \pmod{3};\vspace{3pt}\\
0&\text{if }b^2=3ac\text{ and }p\equiv -1 \pmod{3}.
\end{cases}$$
While these results are interesting, the residue sums above have only been investigated for low-degree polynomials. In this article, we study $\fR_p(f)$ and $S_p(f)$ when $f$ is a \textit{Dickson polynomial}, which is an infinite family of polynomials with degrees that are arbitrarily large.

\begin{definition}
For a nonzero integer $a$, the Dickson polynomials $D_n(x,a)$ for $n\geq 0$ are defined recursively by $D_0(x,a)=2$, $D_1(x,a)=x$, and $D_n(x,a) = x D_{n-1}(x,a) - a D_{n-2}(x,a)$ for $n\ge 2$.
\end{definition}
The Dickson polynomials are ubiquitous in algebra and number theory. They are closely related to the \textit{Chebyshev polynomials} $T_n(x)$, and when $a=-1$, we recover the \textit{Lucas polynomials} $L_n(x)=D_n(x,-1)$. The Lucas polynomials are a ``polynomialization'' of the famous sequence of Lucas numbers, where the $n$th Lucas number can be obtained as $L_n(1)$.

As an illustrative example of residue sums, consider the Lucas polynomials at the prime $p=7$ as in Table~\ref{tab:table1}.
\begin{table}[h]\label{tab:table1}
\caption{Investigation of $S_7(L_n)$ for $1\le n\le 40$.}
\par\noindent\rule{0.7\textwidth}{0.4pt} \\
\vspace{0.2em}
\begin{tabular}{c | c}
$n$ & $S_7(L_n)$ \\
\midrule
 1 & 0 \\
 2 & 1 \\
 3 & 0 \\
 4 & 1 \\
 5 & 0 \\
 6 & 2 \\
 7 & 0 \\
 8 & 6
 \end{tabular}\quad
 \begin{tabular}{c | c}
$n$ & $S_7(L_n)$ \\
\midrule
 9 & 0 \\
 10 & 1 \\
 11 & 0 \\
 12 & 2 \\
 13 & 0 \\
 14 & 1 \\
 15 & 0 \\
 16 & 1
  \end{tabular}\quad
 \begin{tabular}{c | c}
$n$ & $S_7(L_n)$ \\
\midrule
 17 & 0 \\
 18 & 2 \\
 19 & 0 \\
 20 & 1 \\
 21 & 0 \\
 22 & 1 \\
 23 & 0 \\
 24 & 0
 \end{tabular}\quad
 \begin{tabular}{c | c}
$n$ & $S_7(L_n)$ \\
\midrule
 25 & 0 \\
 26 & 1 \\
 27 & 0 \\
 28 & 1 \\
 29 & 0 \\
 30 & 2 \\
 31 & 0 \\
 32 & 1
\end{tabular}\quad
\begin{tabular}{c | c}
$n$ & $S_7(L_n)$ \\
\midrule
 33 & 0 \\
 34 & 1 \\
 35 & 0 \\
 36 & 2 \\
 37 & 0 \\
 38 & 1 \\
 39 & 0 \\
 40 & 6
\end{tabular}
\centering
\end{table}
We remark that the residue sum $S_p(L_n)$ has a very limited number of possible values. Shockingly, this is not a property that is special to the case $p=7$.  As a consequence of our study on the Dickson polynomials, we can provide a complete classification of $S_p(L_n)$ for all odd primes $p$ and positive integers $n$ which shows that $S_p(L_n)\in\{-1,0,1,2\}$. In fact, the following theorem implies that $S_p(D_n(x,a))\in \{-2a^{n/2},-a^{n/2},0,a^{n/2},2a^{n/2}\}$. As all our results hold for finite fields of odd characteristic, we state them in that generality where $q=p^k$ for $p$ an odd prime. 

We denote by $\legendre{\cdot}{q}$ the quadratic character over $\F_q$, that is the multiplicative function defined by
\begin{equation}\label{eqn:quadratic-character}
\begin{aligned}
    \legendre{a}{q} := a^{\frac{q-1}{2}}= \begin{cases}
        0 & \mathrm{if\ }a=0\in\F_q; \\
        1 & \mathrm{if\ }a\ \mathrm{is\ a\ quadratic\ residue\ in\ } \F_q; \\
        -1 & \mathrm{if\ }a\ \mathrm{is\ not\ a\ quadratic\ residue\ in\ } \F_q.
        \end{cases}
\end{aligned}
\end{equation}
This function generalizes the Legendre symbol over a field of prime order. Furthermore, it is natural to extend the definitions of $\fR_p$ and $S_p$ to the field of order $q$ and denote these by $\fR_q$ and $S_q$.

\begin{theorem}\label{thm:S-sum} Let $a$ be an integer, $n$ be a nonnegative integer, and $q$ be an odd prime power such that $a\neq 0 \in \F_q$. Let $d=\gcd(n,q-1)$ and $\delta = \gcd(n,q+1)$, and let $r$ be the highest power of 2 dividing $q^2-1$. Then the sum of the elements in the image of the Dickson polynomials is\footnote{When $(q-1)\mid n$ we remark that $\legendre{a}{q}^{n/d} a^{n/2} = a^n$, while when $(q+1)\mid n$ we have that $\legendre{a}{q}^{n/\delta}a^{n/2} = a^{n-n/\delta}$. We keep it in the stated form to highlight the symmetry.}
\begin{align*}
    S_q(D_n(x,a)) &= \begin{cases}
    0 & 2^{r-1}\mid n; \\
    -\legendre{a}{q}^{\frac{n}{d}+\frac{n}{\delta}} a^{n/2} & \text{else},
    \end{cases} 
    + \begin{cases}
    \legendre{a}{q}^{\frac{n}{d}}a^{n/2} & (q-1)\mid n; \\ 0 & \text{else},
    \end{cases} 
    + \begin{cases} \legendre{a}{q}^{\frac{n}{\delta}}a^{n/2} & (q+1)\mid n; \\
    0 & \text{else}.
    \end{cases}
\end{align*}
\end{theorem}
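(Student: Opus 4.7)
The plan is to lift the computation from $\F_q$ into $\F_{q^2}^*$ via the substitution $x = y + a/y$, which turns Dickson values into the clean multiplicative expression $D_n(y+a/y, a) = y^n + a^n y^{-n}$. The preimages of $\F_q$ under $y \mapsto y + a/y$ are parametrized by $K := \F_q^* \cup H$, where $H := \{y \in \F_{q^2}^* : y^{q+1} = a\}$ is a coset of $\mu_{q+1}$. Hence $\fR_q(D_n) = \pi_n(W)$ for $W := \{y^n : y \in K\}$ and $\pi_n(w) := w + a^n/w$. I would split $W = W_1 \cup W_2$, where $W_1 = \{y^n : y \in \F_q^*\}$ is the unique subgroup of $\F_q^*$ of order $(q-1)/d$, and $W_2 = y_0^n \mu_{(q+1)/\delta}$ (with any fixed $y_0 \in H$) is a coset in $\F_{q^2}^*$. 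A brief check shows that the involution $\iota(w) := a^n/w$ preserves $W$: on $W_1$ because $a^n \in W_1$, and on $W_2$ because $\iota$ coincides with the Frobenius $w \mapsto w^q$ there.

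Because the fibers of $\pi_n$ over $\fR_q(D_n)$ are exactly the $\iota$-orbits in $W$, a standard orbit count yields
\[
S_q(D_n) \;=\; \sum_{w \in W} w \;+\; \sum_{\substack{w \in W \\ w^2 = a^n}} w,
\]
the first sum coming from generic 2-element orbits and the second correcting for $\iota$-fixed points. By inclusion-exclusion, $\sum_{W} w = \sum_{W_1} w + \sum_{W_2} w - \sum_{W_1 \cap W_2} w$, and the subgroup and coset sums are elementary: $\sum_{W_1} w = 1$ iff $|W_1|=1$ (i.e., $(q-1)\mid n$) and vanishes otherwise, while $\sum_{W_2} w = y_0^n = a^{n/(q+1)}$ iff $(q+1) \mid n$ and vanishes otherwise. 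Using the footnoted identities, these reproduce the second and third cases of the stated formula.

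The main obstacle is to show that the remaining piece $F - \sum_{W_1 \cap W_2} w$, with $F := \sum_{w \in W,\, w^2 = a^n} w$, reproduces the first case. Fix a square root $s = a^{n/2}$ of $a^n$ in $\F_{q^2}^*$; then $\{w : w^2 = a^n\} = \{s, -s\}$, and a short character computation using $s^{(q-1)/d} = \legendre{a}{q}^{n/d}$ and the analogous calculation for $s^{(q+1)/\delta}$ gives
\[
s \in W_1 \iff \legendre{a}{q}^{n/d} = 1, \qquad s \in W_2 \iff \legendre{a}{q}^{n/\delta} = 1,
\]
with $-s$ contributing the additional signs $(-1)^{(q-1)/d}$ and $(-1)^{(q+1)/\delta}$, respectively. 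Assembling these four Boolean conditions and tracking the 2-adic valuations of $d, \delta, q \pm 1$---using that exactly one of $q-1, q+1$ has 2-adic valuation $r-1$---the threshold $2^{r-1}\mid n$ emerges as the precise condition forcing the $\pm s$ contributions to cancel, while otherwise a single summand $-\legendre{a}{q}^{n/d + n/\delta}a^{n/2}$ survives. This 2-adic bookkeeping is the subtlest step of the proof.
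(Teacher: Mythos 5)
Your argument is correct, and it takes a genuinely different route from the paper's. The paper partitions the \emph{domain} $\F_q$ into hyperbolic, elliptic, and parabolic elements, explicitly computes the three image sets in seven cases each (governed by the parity of the discrete logarithm $A$ of $a$ and of $\tfrac{q\pm1}{d}$, $\tfrac{q\pm1}{\delta}$), sums each image, and then runs a three-set inclusion--exclusion on the images, treating the residue and non-residue cases separately. You instead work upstairs in $W\subseteq\F_{q^2}^\times$ and exploit that the fibers of $w\mapsto w+a^n/w$ are exactly the orbits of the involution $\iota(w)=a^n/w$, which yields the single identity $S_q(D_n)=\sum_{w\in W}w+\sum_{w\in W,\,w^2=a^n}w$; this eliminates all dependence on $A$ and collapses most of the case analysis into elementary subgroup/coset sums plus a four-sign computation at $\pm a^{n/2}$. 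I checked the final bookkeeping you sketch: writing $\epsilon_1=\legendre{a}{q}^{n/d}$, $\epsilon_2=\legendre{a}{q}^{n/\delta}$, $\sigma_1=(-1)^{(q-1)/d}$, $\sigma_2=(-1)^{(q+1)/\delta}$, the combined term $F-\sum_{W_1\cap W_2}w$ equals $a^{n/2}\bigl([\epsilon_1\ne\epsilon_2]-[\sigma_1\sigma_2\epsilon_1\epsilon_2=-1]\bigr)$, and the paper's Propositions 3.2--3.3 (that $\tfrac{q-1}{d}$ and $\tfrac{q+1}{\delta}$ are never both even, and are both odd iff $2^{r-1}\mid n$) give exactly $\sigma_1\sigma_2=1$ iff $2^{r-1}\mid n$, producing $0$ in that case and $-\epsilon_1\epsilon_2a^{n/2}$ otherwise. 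Two points you should make explicit: (i) $W_1\cap W_2\subseteq\{\pm a^{n/2}\}$, which is what lets you merge the intersection correction into the fixed-point sum --- it follows because any $w\in W_1\cap W_2$ satisfies both $w^q=w$ and $\iota(w)=w^q$, hence $w^2=a^n$; and (ii) the reduction to even $n$ (for odd $n$ the formula's $a^{n/2}$ is not even well defined over $\F_q$, and the sum vanishes by the odd-polynomial symmetry, as the paper notes in its standing assumption). The trade-off: the paper's heavier computation also hands you the explicit image sets and hence the cardinality $|\fR_q(D_n(x,a))|$ of their second theorem, whereas your orbit count recovers that cardinality as $\tfrac12(|W|+\#\{\iota\text{-fixed points}\})$ with a little extra work.
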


\begin{corollary} In the situation above, we have five possibilities for the residue sum, as $n$ and $a$ vary over all integers, and $q$ over all odd prime powers
\[
S_q(D_n(x,a)) \in \left\{ 0, \pm a^{n/2}, \pm 2a^{n/2} \right\}.
\]
\end{corollary}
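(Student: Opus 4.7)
The plan is to read the corollary off directly from Theorem~\ref{thm:S-sum}. In that formula, $S_q(D_n(x,a))$ is expressed as a sum of three case-defined summands, and each summand is either $0$ or of the form $\pm a^{n/2}$ (since the quadratic character $\legendre{a}{q}$ takes values in $\{\pm 1\}$ whenever $a \neq 0$ in $\F_q$, so any power of it does as well). Hence $S_q(D_n(x,a)) = k \cdot a^{n/2}$ for some integer $k$ with $|k| \leq 3$, and the only nontrivial content of the corollary is to exclude $k = \pm 3$; it suffices to rule out the scenario in which all three summands are simultaneously nonzero.

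First I would identify precisely which arithmetic conditions on $n$ and $q$ make each summand nonzero. Reading from the statement: the second summand is nonzero iff $(q-1) \mid n$, the third iff $(q+1) \mid n$, and the first iff $2^{r-1} \nmid n$, where $r$ denotes the $2$-adic valuation of $q^2 - 1$. The key observation is an elementary number-theoretic one: since $q$ is odd, both $q-1$ and $q+1$ are even with $\gcd(q-1,q+1) = 2$, so
\[
    \mathrm{lcm}(q-1, q+1) \;=\; \frac{(q-1)(q+1)}{2} \;=\; \frac{q^2 - 1}{2}.
\]
Thus, if $(q-1) \mid n$ and $(q+1) \mid n$ both hold, then $(q^2-1)/2 \mid n$. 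The $2$-adic valuation of $(q^2-1)/2$ is exactly $r-1$, so this forces $2^{r-1} \mid n$, which in turn makes the first summand vanish, contradicting the assumption that all three are simultaneously nonzero.

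Consequently, at most two of the three summands are ever nonzero at once, and the sum of at most two terms each drawn from $\{+a^{n/2}, -a^{n/2}\}$ automatically lies in $\{0, \pm a^{n/2}, \pm 2a^{n/2}\}$, as claimed. I do not anticipate any real obstacle here: the argument is purely arithmetic bookkeeping with the $2$-adic valuation of $q^2-1$, and the entire proof should fit in a few lines once Theorem~\ref{thm:S-sum} is in hand.
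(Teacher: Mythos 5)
Your argument is correct and is essentially the intended one: the paper states this corollary without proof, and the evident reading-off from Theorem~\ref{thm:S-sum} is exactly what you carry out, with the one genuinely needed observation being that $(q-1)\mid n$ and $(q+1)\mid n$ force $\mathrm{lcm}(q-1,q+1)=(q^2-1)/2\mid n$, hence $2^{r-1}\mid n$, so the first summand vanishes and the coefficient of $a^{n/2}$ never reaches $\pm 3$. No gaps.
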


This demonstrates the first non-trivial classification for $S_q(f)$ where $f$ varies over an infinite family of polynomials of unbounded degree. In the process of proving this theorem, we provide a complete characterization of the size of $\mathfrak{R}_q(D_n(x,a))$ for all $n$, $a$, and odd prime powers $q$, which is the main result of Chou, Gomez-Calderon, and Mullen \cite[Theorem 10]{CGM} for odd characteristic.

\begin{theorem}
Let $p$ be an odd prime power, $n$ an even natural number, $d=\gcd(q-1,n)$, $\delta=\gcd(2(q+1),n)$, and $2^r$ the highest power of 2. Then the size of the value set of the $n$th Dickson polynomial over $\F_q$ is
\begin{align*}
    |\mathfrak{R}_q(D_n(x,a))|= \floor{\frac{q-1}{2d}}+\floor{\frac{q+1}{2\delta}}+1+\begin{cases}
    1
& \mathrm{if\ }\legendre{a}{q}=-1 \mathrm{\ and\ } 2^{r-1}\mid\mid n; \\
0 & \mathrm{otherwise}.\end{cases}
\end{align*}
\end{theorem}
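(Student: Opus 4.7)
The plan is to use the classical parametrization $x = y + a/y$, where $y \in \F_{q^2}^*$ is a root of $t^2 - xt + a \in \F_q[t]$. Under this substitution the defining recurrence of the Dickson polynomials collapses to $D_n(y + a/y, a) = y^n + a^n y^{-n}$. Every $x \in \F_q$ arises this way, and the element $y$ lies in $G_1 := \F_q^*$ when $x^2 - 4a$ is a nonzero square in $\F_q$, and in the coset $G_2 := \{y \in \F_{q^2}^* : y^{q+1} = a\}$ otherwise. These satisfy $|G_1| = q - 1$, $|G_2| = q + 1$, and intersect in exactly $2$ points if $\legendre{a}{q} = 1$ and in no points if $\legendre{a}{q} = -1$. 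Writing $\psi(y) := y^n + a^n y^{-n}$, the image decomposes as
\[
    \fR_q(D_n(x,a)) = \psi(G_1) \cup \psi(G_2).
\]

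Next I would compute $|\psi(G_1)|$ and $|\psi(G_2)|$ by a fiber analysis. A direct manipulation shows that $\psi(y) = \psi(y')$ iff $y'^n = y^n$ or $y^n y'^n = a^n$, and working inside the cyclic structure of $\F_{q^2}^*$ each condition contributes $\gcd(n, |G_i|)$ solutions per fiber. Generically these disjoint contributions give fibers of size $2\gcd(n, |G_i|)$, while at the fixed points of the involution $z \mapsto a^n/z$ acting on $z = y^n$ (namely $z^2 = a^n$) the fiber size halves. Since $n$ is even, the fixed-point candidates in $\F_q^*$ are $\pm a^{n/2}$. A standard orbit-counting argument then yields a closed-form expression for $|\psi(G_i)|$ in terms of $|G_i|$, $\gcd(n, |G_i|)$, and the number of solutions to $y^{2n} = a^n$ inside $G_i$.

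The final step is inclusion--exclusion:
\[
    |\fR_q(D_n(x,a))| = |\psi(G_1)| + |\psi(G_2)| - |\psi(G_1) \cap \psi(G_2)|,
\]
where a careful fiber chase shows that the intersection consists of common fixed-point values. Fixing a generator $\mu$ of $\F_{q^2}^*$ and writing $a = \mu^{(q+1)a_0}$, each of the three quantities reduces to a count of solutions to explicit congruences modulo $q^2 - 1$. After simplification, the dominant contributions $|G_i| / (2 \gcd(n, |G_i|))$ produce the floor terms $\floor{(q-1)/(2d)}$ and $\floor{(q+1)/(2\delta)}$, and the remaining fixed-point corrections assemble into the constant $+1$ together with the conditional correction $\varepsilon$, activated exactly when $\legendre{a}{q} = -1$ and $2^{r-1} \| n$.

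The main obstacle is this last bookkeeping. For $q$ odd the pair $\{v_2(q-1), v_2(q+1)\}$ equals $\{1, r-1\}$, and one must compare $v_2(n)$ to both values in order to determine which of $\pm a^{n/2}$ lies in the image of the $n$-th power map from each of $G_1, G_2$, and correspondingly how many extra fixed points contribute on each side. The correction term $\varepsilon$ precisely isolates the unique 2-adic regime in which an extra fixed point appears on one side without a matching counterpart on the other; carefully verifying this across all parity cases based on $v_2(n)$ versus $v_2(q - 1)$ and $v_2(q + 1)$ is where the technical weight of the argument concentrates.
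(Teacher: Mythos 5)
Your proposal is correct and follows essentially the same route as the paper: your $G_1=\F_q^\times$ and $G_2=\{y:y^{q+1}=a\}$ are exactly the hyperbolic and elliptic parametrizations of \autoref{rmk:preliminary-observations} and \autoref{prop:omega-properties-for-elliptic-elts} (with the parabolic elements absorbed into $G_1\cap G_2$), your fiber criterion is \autoref{prop:dickson-poly-is-2-iff-omegan-is-1}, the fixed-point values $\pm 2a^{n/2}$ and the intersection $\psi(G_1)\cap\psi(G_2)$ are handled via \autoref{lem:CGM} and \autoref{prop:possible-hyp-ellip-overlap}, and the final 2-adic bookkeeping is the content of \autoref{prop:r-d-delta-relationship}, \autoref{prop:parity-constraints}, and the image descriptions in \autoref{prop:hyperbolic-image} and \autoref{prop:elliptic-image}. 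The remaining work you defer is exactly the case analysis the paper carries out, so there is no gap in the approach.
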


\subsection*{Acknowledgements}
The first named author is supported by an NSF Graduate Research Fellowship (DGE-1845298). 

\section{Preliminaries}

In this section, we present some preliminary results and notation that will be useful in our investigation.

\begin{notation} Throughout this paper we will fix $p$ to be an odd prime, and $q$ to be some power of it, defining a finite field $\F_q$. We fix a primitive $(q^2-1)$st root of unity $\zeta_{q^2-1}$ to be a generator of the group of units $\F_{q^2}^\times$. For each positive factor $m$ of $q^2-1$, let $\zeta_m=\zeta_{q^2-1}^{(q^2-1)/m}$. In particular, $\zeta_{q-1}\in\F_q^\times$ is a primitive $(q-1)$st root of unity. When we consider a Dickson polynomial over $\F_q$ and $a$ nonzero in $\F_q$, let $A$ be the smallest positive integer such that $a=\zeta_{q-1}^A$. Finally, we denote by $S_q(f)$ the residue sum of an integral polynomial $f(x)$ over the finite field $\F_q$.
\end{notation}

\subsection{Dickson polynomials}

Using standard methods of solving recurrence relations, one can show that the Dickson polynomials admit a Binet formula expansion:
    \begin{align}\label{eq:Ln}
        D_n(x,a) = \omega(x,a)^n + \overline{\omega}(x,a)^n,
    \end{align}
where    
    \begin{align*}
        \omega(x,a) = \frac{x+\sqrt{x^2-4a}}{2}\quad\text{and}\quad\overline{\omega}(x,a) = \frac{x-\sqrt{x^2-4a}}{2}.
    \end{align*}
Using the expressions for $\omega$ and $\overline{\omega}$, we make note of the following properties: 
\begin{equation}\label{eqn:omega-properties}
\begin{aligned}
    x &= \omega(x,a) + \overline{\omega}(x,a), \\
    a &= \omega(x,a)\overline{\omega}(x,a).
\end{aligned}
\end{equation}
Since $a \neq 0$, from this expression we see $\overline{\omega}(x,a) = a\omega(x,a)^{-1}$.

\begin{example}
One may check that the small index Dickson polynomials are given by
\allowdisplaybreaks
\begin{align*}
    D_0(x,a) &= 2           &D_4(x,a) &=x^4+4x^2a + 2a^2   \\
    D_1(x,a) &= x               &D_5(x,a) &= x^5 + 5x^3a + 5xa^2\\
    D_2(x,a) &= x^2 - 2a                &D_6(x,a) &= x^6 - 6x^4a +9x^2a^2 -2a^3\\
    D_3(x,a) &= x^3 - 3xa               &D_7(x,a) &=x^7-7x^5a+14x^3a^3-7xa^3.
\end{align*}
\end{example}

\begin{proposition}\label{prop:n-odd-case}
If $n$ is odd, then $S_q(D_n(x,a))=0$.
\end{proposition}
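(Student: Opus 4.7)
The plan is to exploit the fact that $D_n(x,a)$ is an odd function of $x$ when $n$ is odd, and then use the fact that the image set is therefore closed under negation.

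First I would establish that $D_n(-x,a) = -D_n(x,a)$ for odd $n$. The cleanest route is via the Binet formula \eqref{eq:Ln}: replacing $x$ by $-x$ sends $(\omega,\overline{\omega})$ to $(-\omega,-\overline{\omega})$ (the two roots of $t^2+xt+a$ are the negatives of the roots of $t^2-xt+a$), so
\begin{align*}
    D_n(-x,a) = (-\omega)^n + (-\overline{\omega})^n = (-1)^n\bigl(\omega^n+\overline{\omega}^n\bigr) = (-1)^n D_n(x,a).
\end{align*}
Alternatively one can see this by induction from the recurrence, noting that $D_n(x,a)$ has only monomials of the same parity as $n$ in $x$.

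Next I would observe that this parity property forces $\fR_q(D_n(x,a)) = -\fR_q(D_n(x,a))$ as subsets of $\F_q$. Indeed, if $y = D_n(x_0,a) \in \fR_q(D_n(x,a))$, then $-y = D_n(-x_0,a) \in \fR_q(D_n(x,a))$.

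Finally I would conclude by a symmetry argument: summing over a set equals summing over its negation, so
\begin{align*}
    S_q(D_n(x,a)) = \sum_{y \in \fR_q(D_n(x,a))} y = \sum_{y \in \fR_q(D_n(x,a))} (-y) = -S_q(D_n(x,a)),
\end{align*}
and since $q$ is odd, $2$ is invertible in $\F_q$, giving $S_q(D_n(x,a)) = 0$. There is no real obstacle here; the only point worth being careful about is verifying the oddness of $D_n$ for odd $n$, which the Binet formula makes transparent.
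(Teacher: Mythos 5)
Your proof is correct and follows essentially the same route as the paper: establish that $D_n(x,a)$ is an odd function of $x$ for odd $n$ (the paper cites the recurrence, you use the Binet formula, but both work), deduce that $\fR_q(D_n(x,a))$ is closed under negation, and conclude that the sum vanishes since $2$ is invertible in $\F_q$. No issues.
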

\begin{proof}
It follows from the recursive definition of $D_n(x,a)$ that if $n$ is odd, then $D_n(x,a)$ is an odd polynomial.  Consequently, if $y\in\fR_q(D_n(x,a))$, then $-y\in\fR_q(D_n(x,a))$.  Since $p$ is odd, $y\not\equiv -y$ in $\F_q$, and we deduce that $S_q(D_n(x,a))=0$.
\end{proof}

\begin{proposition}\label{prop:a0modp} Suppose that $a\equiv 0$ in $\F_q$. Then we have that
\begin{align*}
    S_q(D_n(x,a)) &= \begin{cases} 1 & n=q-1; \\
    0 & \text{else}. \end{cases}
\end{align*}
\end{proposition}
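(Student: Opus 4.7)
The plan is to reduce the Dickson polynomial to a pure power in the degenerate case $a \equiv 0$, and then evaluate the residue sum via a classical fact about finite multiplicative subgroups of $\F_q^\times$.

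\emph{Step 1: Reduce $D_n$ to a monomial.} When $a \equiv 0$ in $\F_q$, the three-term recurrence
\[
D_n(x,a) = xD_{n-1}(x,a) - aD_{n-2}(x,a)
\]
collapses into the single-term relation $D_n(x,0) = x \cdot D_{n-1}(x,0)$ for $n \geq 2$. Combined with the base case $D_1(x,0) = x$, a one-line induction yields $D_n(x,0) = x^n$ for every $n \geq 1$, reducing the task to computing $S_q(x^n)$.

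\emph{Step 2: Identify the image.} Since $\F_q^\times$ is cyclic of order $q-1$, the power map $x \mapsto x^n$ restricted to $\F_q^\times$ is a homomorphism whose image is the unique subgroup $H \leq \F_q^\times$ of order $(q-1)/d$, where $d = \gcd(n, q-1)$. Adjoining the value $0^n = 0$ gives $\fR_q(D_n(x,0)) = H \cup \{0\}$, so the residue sum is exactly $\sum_{h \in H} h$ computed in $\F_q$.

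\emph{Step 3: Evaluate the subgroup sum.} The elements of $H$ are precisely the roots in $\F_q$ of $y^{|H|} - 1$. By Vieta's formulas the sum of these roots vanishes whenever $|H| \geq 2$ (the coefficient of $y^{|H|-1}$ being $0$), while $H = \{1\}$ with $\sum_{h\in H} h = 1$ when $|H| = 1$. The trivial case $|H| = 1$ occurs exactly when $d = q-1$, i.e., when $(q-1) \mid n$, which in the natural range $1 \leq n \leq q-1$ picks out $n = q-1$ alone, yielding the claimed dichotomy.

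There is no substantial obstacle here: once one observes that the recurrence degenerates into the identity $D_n(x,0) = x^n$, the remainder is the textbook fact that the elements of any nontrivial finite multiplicative subgroup of a field sum to zero, applied to the subgroup of $n$th powers in $\F_q^\times$.
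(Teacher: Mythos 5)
Your proof takes the same route as the paper: reduce $D_n(x,0)$ to $x^n$ via the degenerate recurrence and then sum $n$th powers over $\F_q$, which the paper states in one line and you correctly flesh out via the subgroup of $n$th powers and Vieta. Your Step 3 also rightly observes that the sum is $1$ exactly when $(q-1)\mid n$, which is slightly more precise than the stated condition $n=q-1$.
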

\begin{proof} Via the recursive relation of the Dickson polynomials, we have that $D_n(x,a) \equiv x^n$ whenever $a$ vanishes over $\F_q$. From this the problem reduces to summing $n$th powers over a finite field.
\end{proof}

\begin{assumption}\label{ass:ass} As \autoref{prop:n-odd-case} determines the residue sum $S_q(D_n(x,a))$ for all odd $n$, for the remainder of this paper, we will make the standing assumption that $n$ is even. Additionally \autoref{prop:a0modp} determines the case where $a\equiv 0 \in \F_q$, so we will assume without loss of generality that $a\not\equiv 0 \in \F_q$. Finally we will make the standing assumption that $q\ne 3$, however one may check by direct computation that \autoref{thm:S-sum} holds when $q=3$.
\end{assumption}
Using $\overline{\omega}(x,a)=a\omega(x,a)^{-1}$ , we simplify \autoref{eq:Ln} to 
    \begin{align*}
        D_n(x,a) = \omega(x,a)^n + a^n\omega(x,a)^{-n},
    \end{align*}
and we exploit this form of $D_n$ to prove the following proposition and other results throughout the paper.

\begin{proposition}\label{prop:dickson-poly-is-2-iff-omegan-is-1} Let $x,y\in\F_q$ be arbitrary.  Then $D_n(x,a) = D_n(y,a)$ if and only if $\omega(x,a)^n = \omega(y,a)^{n}$ or $\omega(x,a)^n=\overline{\omega}(y,a)^n=a^n\omega(y,a)^{-n}$.
\end{proposition}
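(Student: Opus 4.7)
The plan is to leverage the simplified Binet-style expression
\[
D_n(x,a) = \omega(x,a)^n + a^n\omega(x,a)^{-n}
\]
derived just above the proposition. Under \autoref{ass:ass} we have $a\neq 0$ in $\F_q$, so $\omega(x,a)$ and $\overline{\omega}(x,a)$ are the two nonzero roots of $t^2 - xt + a$ in $\F_{q^2}^\times$, and in particular $\omega(x,a)^n$ and $\omega(y,a)^n$ are well-defined nonzero elements of $\F_{q^2}$.

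Setting $u := \omega(x,a)^n$ and $v := \omega(y,a)^n$, the equation $D_n(x,a) = D_n(y,a)$ becomes
\[
u + \tfrac{a^n}{u} \;=\; v + \tfrac{a^n}{v}
\]
in $\F_{q^2}$. Clearing denominators by multiplying through by $uv$ (permissible since $u,v\neq 0$) and rearranging gives
\[
(u-v)(uv - a^n) = 0.
\]
Since $\F_{q^2}$ is a field, this forces either $u = v$, which is the first alternative $\omega(x,a)^n = \omega(y,a)^n$, or $uv = a^n$, which rearranges to $u = a^n v^{-1} = a^n\omega(y,a)^{-n} = \overline{\omega}(y,a)^n$, the second alternative.

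Conversely, if either $u = v$ or $u = a^n/v$, then the multisets $\{u, a^n/u\}$ and $\{v, a^n/v\}$ coincide, so their sums agree and $D_n(x,a) = D_n(y,a)$. There is no real obstacle here; the only point requiring care is to note that the identity takes place in $\F_{q^2}$ (since $\omega(x,a)$ need not lie in $\F_q$) and to invoke $a\neq 0$ to guarantee invertibility of $u$ and $v$.
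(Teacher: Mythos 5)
Your proof is correct and takes essentially the same route as the paper: both arguments clear denominators in the identity $\omega(x,a)^n + a^n\omega(x,a)^{-n} = \omega(y,a)^n + a^n\omega(y,a)^{-n}$ and read off the factorization (the paper phrases it as $\omega(x,a)^n$ being a root of $\left(t-\omega(y,a)^n\right)\left(t-a^n\omega(y,a)^{-n}\right)$, which is the same computation as your $(u-v)(uv-a^n)=0$). Your explicit treatment of the converse direction is a small completeness improvement over the paper, which leaves that trivial step implicit.
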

\begin{proof} Suppose that $D_n(x,a) = \omega(x,a)^n + a^n\omega(x,a)^{-n} = \omega(y,a)^n + a^n\omega(y,a)^{-n} = D_n(y,a)$. By multiplying both sides of the equation by $\omega(x,a)^n$ and rearranging the terms, we have that
\begin{align*}
    \omega(x,a)^{2n} - \left(\omega(y,a)^n + a^n\omega(y,a)^{-n}\right)\omega(x,a)^n + a^n &= 0.
\end{align*}
That is, $\omega(x,a)^n$ is a solution of the polynomial $$t^2 - \left(\omega(y,a)^n + a^n\omega(y,a)^{-n}\right)t + a^n = \left(t-\omega(y,a)^n\right)\left(t-a^n\omega(y,a)^{-n}\right).$$
\end{proof}

\begin{proposition} \cite[Lemma~7]{CGM} Let $x\in \mathbb{F}_q$. Then we have that $\omega(x,a)^n = \bar{\omega}(x,a)^n$ if and only if $D_n(x,a) = \pm 2a^{n/2}$.
\end{proposition}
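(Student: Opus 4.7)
The plan is to perform a direct algebraic manipulation using the two key identities established in \autoref{eqn:omega-properties} and their corollary $D_n(x,a) = \omega(x,a)^n + a^n\omega(x,a)^{-n}$. Write $\omega = \omega(x,a)$ and $\bar\omega = \bar\omega(x,a)$ for brevity, so that $\omega\bar\omega = a$ and consequently $\omega^n \bar\omega^n = a^n$. By \autoref{ass:ass}, $n$ is even, so the expression $a^{n/2}$ is unambiguous.

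For the forward direction, assume $\omega^n = \bar\omega^n$. Then immediately $D_n(x,a) = \omega^n + \bar\omega^n = 2\omega^n$. Multiplying the hypothesis $\omega^n = \bar\omega^n$ by $\omega^n$ yields $\omega^{2n} = (\omega\bar\omega)^n = a^n$, so $\omega^n$ is a square root of $a^n = (a^{n/2})^2$ in $\F_{q^2}$. Since $p$ is odd the only square roots are $\pm a^{n/2}$, hence $\omega^n = \pm a^{n/2}$ and $D_n(x,a) = 2\omega^n = \pm 2a^{n/2}$.

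For the converse, suppose $D_n(x,a) = \varepsilon \cdot 2a^{n/2}$ with $\varepsilon \in \{+1,-1\}$. Substituting $\bar\omega^n = a^n \omega^{-n}$ into $\omega^n + \bar\omega^n = 2\varepsilon a^{n/2}$ and clearing denominators gives
\begin{equation*}
    \omega^{2n} - 2\varepsilon a^{n/2}\omega^n + a^n = (\omega^n - \varepsilon a^{n/2})^2 = 0.
\end{equation*}
Therefore $\omega^n = \varepsilon a^{n/2}$, and then $\bar\omega^n = a^n/\omega^n = a^n/(\varepsilon a^{n/2}) = \varepsilon a^{n/2} = \omega^n$, as required.

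I do not anticipate any real obstacle; the only subtlety is bookkeeping about where the quantities live, since $\omega$ and $\bar\omega$ may lie in the quadratic extension $\F_{q^2}$ rather than in $\F_q$ itself. However, all of the manipulations above are identities in $\F_{q^2}$, and the uniqueness of square roots in odd characteristic (used twice) is equally valid there, so the argument is clean.
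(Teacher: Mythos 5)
Your argument is correct. Note that the paper does not actually prove this statement---it is quoted verbatim from \cite[Lemma~7]{CGM}---so there is no internal proof to compare against; your derivation fills that gap with the natural argument. Both directions are sound: the forward direction correctly uses $\omega^{2n}=(\omega\bar\omega)^n=a^n$ together with the fact that $a^n$ has exactly the two square roots $\pm a^{n/2}$ in a field of odd characteristic, and the converse is the same ``multiply by $\omega^n$ and factor the resulting quadratic in $\omega^n$'' device that the paper itself employs in the proof of \autoref{prop:dickson-poly-is-2-iff-omegan-is-1}, here specialized so that the quadratic degenerates to the perfect square $(\omega^n-\varepsilon a^{n/2})^2$. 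The only hypotheses you rely on---$n$ even and $a\neq 0$ (so $\omega\neq 0$ and $\omega^{-n}$ makes sense)---are exactly the standing assumptions of \autoref{ass:ass}, and your remark that everything takes place in $\F_{q^2}$ is the right bookkeeping.
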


\begin{corollary}
For any $x\in \F_q$, we have that $D_n(x,a) = \pm2 a^{n/2}$ if and only if $\omega(x,a)^n = \pm a^{n/2}$.
\end{corollary}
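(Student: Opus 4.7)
The plan is to bootstrap from the preceding proposition (CGM, Lemma 7), which identifies $D_n(x,a) = \pm 2a^{n/2}$ with the condition $\omega(x,a)^n = \overline{\omega}(x,a)^n$. Granting that equivalence, the corollary reduces to the purely algebraic statement that $\omega(x,a)^n = \overline{\omega}(x,a)^n$ if and only if $\omega(x,a)^n = \pm a^{n/2}$.

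First I would invoke the identity $\omega(x,a)\overline{\omega}(x,a) = a$ from \autoref{eqn:omega-properties}. Since $a\ne 0$ in $\F_q$ by \autoref{ass:ass}, $\omega(x,a)$ is a unit in $\F_{q^2}^\times$, and we may substitute $\overline{\omega}(x,a) = a\,\omega(x,a)^{-1}$. Consequently the equation $\omega(x,a)^n = \overline{\omega}(x,a)^n$ rearranges to
\[
\omega(x,a)^{2n} = a^n.
\]
By the standing assumption that $n$ is even, $a^{n/2}$ is a well-defined element of $\F_q$, so this factors as
\[
\bigl(\omega(x,a)^n - a^{n/2}\bigr)\bigl(\omega(x,a)^n + a^{n/2}\bigr) = 0
\]
inside the field $\F_{q^2}$, which holds if and only if $\omega(x,a)^n = \pm a^{n/2}$.

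There is really no obstacle here beyond unpacking definitions; the only mild point to double-check is that squaring is injective on the appropriate set of candidate values, which follows immediately from $\F_{q^2}$ being a field of odd characteristic together with the evenness of $n$ guaranteeing that $a^{n/2}$ lies in the ambient field. Combining this equivalence with the cited proposition yields the stated biconditional.
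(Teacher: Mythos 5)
Your argument is correct and is exactly the intended (unwritten) justification in the paper: the corollary is stated without proof immediately after \cite[Lemma~7]{CGM}, and the deduction is precisely the substitution $\overline{\omega}(x,a)=a\,\omega(x,a)^{-1}$ turning $\omega(x,a)^n=\overline{\omega}(x,a)^n$ into $\omega(x,a)^{2n}=a^n$, followed by factoring over the field $\F_{q^2}$ using that $n$ is even. No gaps.
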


\subsection{Hyperbolic, elliptic, and parabolic elements}

\begin{notation}
We partition $\F_q$ into three subsets, denoted by
\begin{align*}
\mathcal{H}_q(a) & = \left\{ x\in\mathbb{F}_q \colon \chi_q(x^2-4a)=1 \right\},\\
\mathcal{E}_q(a) &= \left\{x\in\mathbb{F}_p \colon \chi_q(x^2-4a)= -1 \right\},\\
\mathcal{P}_q(a) &= \left\{x\in\mathbb{F}_p \colon \chi_q(x^2-4a)=0 \right\}.
\end{align*}
We will refer to elements of $\mathcal{H}_q(a)$, $\mathcal{E}_q(a)$, and $\mathcal{P}_q(a)$ as \emph{hyperbolic}, \emph{elliptic}, and \emph{parabolic}, respectively. This terminology is inspired by the work of Bourgain, Gamburd, and Sarnak \cite{bgs} on showing the connectivity of the Markoff mod $p$ graphs.
\end{notation}

Our understanding of $\mathfrak{R}_q(D_n(x,a))$ will come from investigating the images of these three sets under the map $D_n$. We will denote by $D_n(\mathcal{H}_q(a),a)\subseteq\F_q$ the image set of the hyperbolic elements under the Dickson polynomial, and similarly for the elliptic and parabolic sets. In order to compute the residue sum $S_q(D_n(x,a))$, it will suffice to have a handle on these three image sets as well as their potential overlaps.

\begin{remark}\label{rmk:preliminary-observations}
We note here some preliminary observations about the quantity $\omega(x,a)$.
\begin{enumerate}
    \item\label{rmk:preliminary-observations(1)} If $x\in \mathcal{H}_q(a)$, then $\omega(x,a)$ is an element of $\F_q$. In particular, $\omega(x,a)^{q-1} =1$, thus we may write $\omega(x,a) = \zeta_{q-1}^c$ for some $c$, where $\zeta_{q-1}$ is our fixed primitive $(q-1)$st root of unity.

    \item If $x\in \mathcal{E}_q(a)$, then $\omega(x,a)$ is an element of $\F_{q^2}$ but not an element of $\F_q$. Thus we have that $\omega(x,a)^{q^2-1} = 1$.

    \item Observe that $\mathcal{P}_q(a)$ is nonempty if and only if $\legendre{a}{q}=1$, where we recall that $\chi_q$ is the quadratic character as in \autoref{eqn:quadratic-character}. In this situation, if $x\in \mathcal{P}_q(a)$, then $\omega(x,a) = x/2$ is an element of $\F_q$. Moreover, we have that $x=\pm\sqrt{4a}=\pm2\sqrt{a}$. Now, since $n$ is even,
\begin{align*}
    D_n(x,a) &= \omega(x,a)^n + \overline{\omega(x,a)}^{n} = \left( \frac{\pm 2\sqrt{a}}{2} \right)^n + \left( \frac{\pm 2\sqrt{a}}{2} \right)^{n} = 2 a^{n/2}.
\end{align*}
\end{enumerate}
\end{remark}
We now establish the following property of elliptic elements.
\begin{proposition}\label{prop:omega-properties-for-elliptic-elts} For all elliptic elements $x\in\mathcal{E}_q(a)$, we have that $\omega(x,a)^{q+1}= a \in \F_q$. In particular, we have that $\omega(x,a)=\zeta_{q^2-1}^{A+k(q-1)}$ in $\F_{q^2}$ for some integer $k$.
\end{proposition}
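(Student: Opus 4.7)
The plan is to exploit the Frobenius automorphism of $\F_{q^2}/\F_q$ and the relation $\omega(x,a)\bar\omega(x,a)=a$ from \autoref{eqn:omega-properties}.

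First, I would observe that for an elliptic element $x\in\mathcal{E}_q(a)$, the discriminant $x^2-4a$ is a nonzero non-square in $\F_q$, so $\sqrt{x^2-4a}$ lies in $\F_{q^2}\setminus\F_q$. Applying the Frobenius $y\mapsto y^q$ (which fixes $\F_q$ pointwise), we have
\[
(\sqrt{x^2-4a})^q = (x^2-4a)^{(q-1)/2}\cdot \sqrt{x^2-4a} = \legendre{x^2-4a}{q}\sqrt{x^2-4a} = -\sqrt{x^2-4a},
\]
since $x$ is elliptic. Therefore
\[
\omega(x,a)^q = \left(\frac{x+\sqrt{x^2-4a}}{2}\right)^q = \frac{x-\sqrt{x^2-4a}}{2} = \overline{\omega}(x,a).
\]
Combined with the identity $\omega(x,a)\overline{\omega}(x,a)=a$ from \autoref{eqn:omega-properties}, this yields $\omega(x,a)^{q+1}=\omega(x,a)\cdot\omega(x,a)^q=\omega(x,a)\overline{\omega}(x,a)=a$, which is in $\F_q$.

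For the second claim, since $\omega(x,a)\in\F_{q^2}^\times$, write $\omega(x,a)=\zeta_{q^2-1}^{m}$ for some integer $m$. Using our convention $\zeta_{q-1}=\zeta_{q^2-1}^{q+1}$ and the definition of $A$ (so that $a=\zeta_{q-1}^A=\zeta_{q^2-1}^{A(q+1)}$ in $\F_q\subset\F_{q^2}$), the identity $\omega(x,a)^{q+1}=a$ becomes
\[
\zeta_{q^2-1}^{m(q+1)} = \zeta_{q^2-1}^{A(q+1)},
\]
so that $m(q+1)\equiv A(q+1)\pmod{q^2-1}$, equivalently $m\equiv A\pmod{q-1}$. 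Hence $m=A+k(q-1)$ for some integer $k$, giving the claimed form $\omega(x,a)=\zeta_{q^2-1}^{A+k(q-1)}$.

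I expect no serious obstacle here: the computation $\omega^q=\bar\omega$ is the standard Galois-conjugation argument in $\F_{q^2}$, and the indexing step is just a congruence calculation in $\Z/(q^2-1)\Z$. The only subtle point is tracking the chosen compatibility between $\zeta_{q-1}$ and $\zeta_{q^2-1}$, but this is fixed by the notation set earlier in the section.
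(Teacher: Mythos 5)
Your proof is correct and follows essentially the same route as the paper's: compute $\omega(x,a)^q=\overline{\omega}(x,a)$ via the Frobenius acting on $\sqrt{x^2-4a}$ (using that $\legendre{x^2-4a}{q}=-1$ for elliptic $x$), multiply by $\omega(x,a)$ to get $a$, and then solve the congruence $m(q+1)\equiv A(q+1)\pmod{q^2-1}$ for the exponent. The only cosmetic difference is that you invoke the quadratic character directly where the paper phrases the same fact as ``$\sqrt{x^2-4a}$ is not fixed by Frobenius.''
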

\begin{proof}
Via the freshman's dream, we may write $\omega(x,a)^q$ as
\begin{align*}
    \omega(x,a)^q &= \frac{x + \left( \sqrt{x^2- 4a} \right)^q}{2},
\end{align*}
and we observe that
\begin{align*}
     \overline{\omega}(x,a) &= \frac{x - \sqrt{x^2-4a}}{2}.
\end{align*}
As $x$ is not parabolic, the quantity $x^2 - 4a$ is nonvanishing, thus we have that $\left( x^2 - 4a \right)^{q-1} = 1$ in $\F_q$. As $\sqrt{x^2-4a}$ is not defined over $\F_q$, it is not fixed by the Frobenius endomorphism on $\F_{q^2}$.
This implies that $\left( x^2-4a \right)^{\frac{q-1}{2}} = -1$ in $\F_{q^2}$. Thus we see that
\begin{align*}
    \omega(x,a)^q &= \frac{x + \left( \sqrt{x^2-4a} \right)^q}{2} = \frac{x + \left( x^2-4a \right)^{\frac{q-1}{2}} \sqrt{x^2 - 4a}}{2} = \frac{x - \sqrt{x^2-4a}}{2} = \overline{\omega}(x,a).
\end{align*}
Therefore $\omega(x,a)^{q+1} = \omega(x,a)\overline{\omega}(x,a)=a$.

By \autoref{rmk:preliminary-observations}, we have that $\omega(x,a) = \zeta_{q^2-1}^c$ for some $c$. From the observation that $\omega(x,a)^{q+1} = a = \zeta_{q-1}^A = \zeta_{q^2-1}^{(q+1)A}$, we must have that $c(q+1) \equiv A(q+1) \pmod{q^2-1}$. That is, $c = A + k(q-1)$ for some integer $k$, which we may assume to lie in the range $1\le k \le q+1$, since we only care about the residue of $c$ modulo $q^2-1$.
\end{proof}

We can now state explicitly what each set in the partition of $\F_q$ looks like:

\begin{proposition}\label{prop:hyp-ell-para-sets-description}
The hyperbolic, elliptic, and parabolic sets over the finite field $\mathbb{F}_q$ are given by:
\allowdisplaybreaks
\begin{align*}
        \Hyp_q(a) &= \OB{\zeta_{q-1}^c + \zeta_{q-1}^{A-c} \colon 1\le c\le q-1\ \mathrm{and }\ 2c\not\equiv A \pmod{q-1} }, \\
        \Ell_q(a) &= \OB{\zeta_{q^2-1}^{A+k(q-1)}+\zeta_{q^2-1}^{Aq-k(q-1)} \colon 1\le k\le q+1\ \mathrm{and}\ 2k \not\equiv A \pmod{q+1}   }, \\
        \Para_q(a) &= \begin{cases}
        \OB{\pm 2 a^{1/2}} & \mathrm{if }\ \legendre{a}{q}=1; \\
        \emptyset & \mathrm{if }\ \legendre{a}{q}=-1.
        \end{cases}
    \end{align*}
\end{proposition}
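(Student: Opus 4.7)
The plan is to verify each of the three set-descriptions separately, using in each case the Binet-formula identity $x = \omega(x,a) + \overline{\omega}(x,a)$ together with the relation $\omega(x,a)\overline{\omega}(x,a) = a = \zeta_{q-1}^A$. The parabolic case is already dispatched by \autoref{rmk:preliminary-observations}(3), so the actual work lies in the hyperbolic and elliptic descriptions. My overall strategy is to translate the defining square-class conditions on $x^2 - 4a$ into constraints on exponents of roots of unity, and to identify the modular-arithmetic exclusion in each parameterization with the condition that $x$ is not parabolic (hyperbolic case) or that $\omega(x,a) \notin \F_q$ (elliptic case).

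For the hyperbolic case, I would start from $x \in \Hyp_q(a)$: then $\sqrt{x^2-4a} \in \F_q$, so $\omega(x,a) \in \F_q^\times$ can be written uniquely as $\zeta_{q-1}^c$ with $1 \le c \le q-1$. The relation $\omega(x,a)\overline{\omega}(x,a) = a$ forces $\overline{\omega}(x,a) = \zeta_{q-1}^{A-c}$, yielding $x = \zeta_{q-1}^c + \zeta_{q-1}^{A-c}$. For the exclusion $2c \not\equiv A \pmod{q-1}$, I would compute
\[
x^2 - 4a = (\omega(x,a) - \overline{\omega}(x,a))^2 = \left(\zeta_{q-1}^c - \zeta_{q-1}^{A-c}\right)^2,
\]
which vanishes exactly when $2c \equiv A \pmod{q-1}$; hyperbolicity requires $x^2 - 4a \neq 0$. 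Conversely, for any $x$ of the stated form, the same identity exhibits $x^2 - 4a$ as a nonzero square, so $x \in \Hyp_q(a)$.

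For the elliptic case, \autoref{prop:omega-properties-for-elliptic-elts} directly supplies $\omega(x,a) = \zeta_{q^2-1}^{A+k(q-1)}$ for some $k \in \{1,\dots,q+1\}$, and its proof shows $\overline{\omega}(x,a) = \omega(x,a)^q$. Using $q(q-1) \equiv -(q-1) \pmod{q^2-1}$, the exponent simplifies to $Aq - k(q-1)$, giving the desired form for $x$. To verify the constraint, I would argue that $2k \equiv A \pmod{q+1}$ is equivalent to $\omega(x,a)^{q-1} = \zeta_{q+1}^{A-2k} = 1$, i.e., $\omega(x,a) \in \F_q$, contradicting ellipticity. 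For the converse, given such an $x$ with $2k \not\equiv A \pmod{q+1}$, set $y = \omega(x,a) - \overline{\omega}(x,a)$; then $y^q = \overline{\omega}(x,a) - \omega(x,a) = -y$ and $y \ne 0$, so $y^{q-1} = -1$, whence $x^2 - 4a = y^2 \in \F_q^\times$ satisfies $(y^2)^{(q-1)/2} = -1$, making it a non-square in $\F_q$ and placing $x \in \Ell_q(a)$.

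The main obstacle is matching the modular constraints ($2c \not\equiv A \pmod{q-1}$ and $2k \not\equiv A \pmod{q+1}$) to the geometric conditions that $x$ is non-parabolic and that $\omega(x,a) \notin \F_q$ respectively; once these equivalences are pinned down, everything else is routine exponent bookkeeping modulo $q^2-1$, and the $q(q-1) \equiv -(q-1) \pmod{q^2-1}$ trick is what makes the elliptic exponent land in the symmetric form $Aq - k(q-1)$.
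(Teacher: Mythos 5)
Your proposal is correct and follows essentially the same route as the paper: parameterize $\omega(x,a)$ by roots of unity via \autoref{rmk:preliminary-observations} and \autoref{prop:omega-properties-for-elliptic-elts}, and identify the congruence exclusions $2c\not\equiv A \pmod{q-1}$ and $2k\not\equiv A\pmod{q+1}$ with the degeneracy conditions ($x$ parabolic, respectively $\omega(x,a)\in\F_q$). Your verification that the elliptic candidates are genuinely elliptic, via $y^q=-y$ hence $y^{q-1}=-1$ for $y=\omega-\overline{\omega}$, is a slightly more explicit repackaging of the paper's check that $\sqrt{x^2-4a}$ is not fixed by Frobenius, and if anything your write-up is more careful about the converse inclusions.
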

\begin{proof}
For $x$ hyperbolic, we know that $\omega(x,a) = \zeta_{q-1}^c$ for some $c$ by \autoref{rmk:preliminary-observations}. We should see for which $c$ we are getting hyperbolic elements. Since
\begin{align*}
    \sqrt{x^2-4a} = \omega(x,a) - \bar{\omega}(x,a),
\end{align*}
we can check whether this quantity is defined over $\mathbb{F}_q$ (meaning that $x^2-4a$ is a residue). This is equivalent to checking that it is fixed under the Frobenius. Note that
\begin{align*}
    \omega(x,a) - \bar{\omega}(x,a) = \zeta_{q-1}^c - \zeta_{q-1}^{A-c}.
\end{align*}
Applying the Frobenius, we see
\begin{align*}
    \left( \zeta_{q-1}^c - \zeta_{q-1}^{A-c} \right)^q &= \zeta_{q-1}^{qc} - \zeta_{q-1}^{q(A-c)} = \zeta_{q-1}^{c} - \zeta_{q-1}^{A-c}.
\end{align*}
Thus for any $c$, we have that $\zeta_{q-1}^c + \zeta_{q-1}^{A-c}$ gives an element for which $\sqrt{x^2-4a} \in \mathbb{F}_q$. We should verify that it is not accidentally producing a parabolic element, i.e. that we are not accidentally getting $\sqrt{x^2-4a} = 0$. This would occur for some $c$ if
\begin{align*}
    \sqrt{x^2-4a} &= \zeta_{q-1}^c - \zeta_{q-1}^{A-c} = 0,
\end{align*}
that is, if $2c\equiv A \pmod{q-1}$.

For elliptic elements, we want to verify that $\sqrt{x^2-4a}$ is not defined over $\mathbb{F}_q$, equivalently that it is not fixed under the Frobenius. So we want to throw out any $k$ for which
\begin{align*}
    \left( \zeta_{q^2-1}^{A + k(q-1)} - \zeta_{q^2-1}^{Aq-k(q-1)} \right) &= \left( \zeta_{q^2-1}^{A + k(q-1)} - \zeta_{q^2-1}^{Aq-k(q-1)} \right)^q.
\end{align*}
This would give us the equality
\begin{align*}
    \zeta_{q^2-1}^{A + k(q-1)} - \zeta_{q^2-1}^{Aq-k(q-1)} &= \zeta_{q^2-1}^{Aq + k(q^2-q)} - \zeta_{q^2-1}^{Aq^2-k(q^2-q)} \\
    &= \zeta_{q^2-1}^{Aq + k(1-q)} - \zeta_{q^2-1}^{A - k(1-q)}.
\end{align*}
Rearranging, we see that this is the same as
\begin{align*}
    2\zeta_{q^2-1}^{A+k(q-1)} &= 2\zeta_{q^2-1}^{Aq - k(q-1)}.
\end{align*}
Since $2$ is invertible in $\mathbb{F}_{q^2}$ we are left with the congruence
\begin{align*}
    A + k(q-1) \equiv Aq - k(q-1) \pmod{q^2-1}.
\end{align*}
This is equivalent to $2k \equiv A \pmod{q+1}$. So we must omit these $k$'s out in order to ensure we are getting an elliptic element.
\end{proof}

We will be interested in the images of the hyperbolic, elliptic, and parabolic sets under the Dickson polynomial $D_n(x,a)$. In particular if we can understand the images over these sets, as well as their potential intersection, then we can completely understand $\im(D_n(x,a))$.

\begin{lemma}\label{lem:CGM} \cite[Lemma~8]{CGM} Let $x,y\in \mathbb{F}_q^\times$, and let $x = u + a/u$ and $y = v + a/v$, where $u \in \mathbb{F}_q^\times$, and $v \in \mathbb{F}_{q^2}^\times$ so that $v^{q+1} = a$. Then if $u^n = v^n$ for some $n\ge 0$, this implies that
\begin{align*}
    u^n = a^n/u^n = v^n = a^n/v^n.
\end{align*}
In particular they are all equal to $a^{n/2}$ or $-a^{n/2}$.
\end{lemma}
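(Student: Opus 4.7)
The plan is to exploit the asymmetric behavior of $u$ and $v$ under the Frobenius endomorphism $\varphi : z \mapsto z^q$ of $\F_{q^2}$. Since $u\in\F_q^\times$, Frobenius fixes $u$, so $\varphi(u^n) = u^n$. On the other hand, the condition $v^{q+1}=a$ rearranges to $v^q = a/v$, so $\varphi(v^n) = (v^q)^n = a^n/v^n$. This contrast is the engine of the proof.

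First I would apply $\varphi$ to both sides of the hypothesis $u^n=v^n$. The left side is unchanged while the right side becomes $a^n/v^n$, yielding the identity $u^n = a^n/v^n$. Combined with $u^n=v^n$, this immediately produces the chain
\[
u^n \;=\; v^n \;=\; \frac{a^n}{v^n} \;=\; \frac{a^n}{u^n},
\]
which is the first assertion of the lemma.

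To extract the closed form, I would use the tautology $u^n\cdot(a^n/u^n)=a^n$ together with the equality $u^n = a^n/u^n$ just established, which forces $(u^n)^2 = a^n$. Since $n$ is even by \autoref{ass:ass}, the element $a^{n/2}\in\F_{q^2}$ is well-defined (once a square root is fixed), and the only square roots of $a^n$ in $\F_{q^2}$ are $\pm a^{n/2}$. Hence $u^n\in\{a^{n/2},-a^{n/2}\}$, and by the chain above the same value is shared by $v^n$, $a^n/u^n$, and $a^n/v^n$. There is no real obstacle here; the crux is recognizing that the hypothesis $v^{q+1}=a$ is precisely what lets Frobenius turn $v^n$ into $a^n/v^n$, so that applying Frobenius to $u^n=v^n$ automatically produces the reciprocal relation needed to pin down the square.
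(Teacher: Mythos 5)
Your argument is correct. Note that the paper does not actually prove this lemma---it is imported verbatim as \cite[Lemma~8]{CGM}---so there is no internal proof to compare against; what you have written is a clean, self-contained justification. Your mechanism (Frobenius fixes $u^n$ because $u\in\F_q^\times$, while $v^{q+1}=a$ forces $(v^n)^q=a^n/v^n$, so applying Frobenius to $u^n=v^n$ yields the reciprocal identity and hence $(u^n)^2=a^n$) is exactly the technique the paper itself uses in \autoref{prop:omega-properties-for-elliptic-elts} to show $\omega(x,a)^q=\overline{\omega}(x,a)$ for elliptic $x$, so your proof is fully consistent with the surrounding machinery. One small cosmetic point: since $n$ is even under \autoref{ass:ass}, $a^{n/2}$ is just an integer power of $a$ and already lies in $\F_q$, so no choice of square root is needed; the parenthetical about fixing a square root is unnecessary (it would only matter if one wanted the lemma for odd $n$, in which case $a^{n/2}$ must be interpreted via a chosen square root of $a$ in $\F_{q^2}$, e.g.\ $v^{(q+1)/2}$).
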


This result allows us to restrict the values of any possible overlap in the hyperbolic and elliptic images.
\begin{proposition}\label{prop:possible-hyp-ellip-overlap} There are only two possible values for the intersection of the hyperbolic and elliptic images, namely
\begin{align*}
    D_n \left( \mathcal{H}_q(a),a \right) \cap D_n \left( \mathcal{E}_q(a) ,a\right) \subseteq \left\{ \pm 2a^{n/2} \right\}.
\end{align*}
\end{proposition}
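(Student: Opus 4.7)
The plan is to reduce to Lemma~\ref{lem:CGM} via Proposition~\ref{prop:dickson-poly-is-2-iff-omegan-is-1}. Suppose $z\in D_n(\mathcal{H}_q(a),a)\cap D_n(\mathcal{E}_q(a),a)$, so there exist $x\in\mathcal{H}_q(a)$ and $y\in\mathcal{E}_q(a)$ with $D_n(x,a)=D_n(y,a)=z$. Write $u=\omega(x,a)$ and $v=\omega(y,a)$. By Remark~\ref{rmk:preliminary-observations} we have $u\in\F_q^\times$, and by Proposition~\ref{prop:omega-properties-for-elliptic-elts} we have $v\in\F_{q^2}^\times$ with $v^{q+1}=a$. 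Moreover $x=u+a/u$ and $y=v+a/v$.

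By Proposition~\ref{prop:dickson-poly-is-2-iff-omegan-is-1}, the equality $D_n(x,a)=D_n(y,a)$ forces either $u^n=v^n$ or $u^n=a^n v^{-n}$. In the first case, Lemma~\ref{lem:CGM} applied directly to $(u,v)$ yields $u^n=a^n/u^n=\pm a^{n/2}$, and therefore
\[
z = D_n(x,a) = u^n + \frac{a^n}{u^n} = \pm 2a^{n/2}.
\]

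For the second case I would observe that $\bar v := a/v = v^q$ (since $v^{q+1}=a$ and $a^q=a$), so $\bar v^{q+1}=a$ as well, and $y=\bar v + a/\bar v$ too. The hypothesis $u^n = a^n v^{-n}$ now reads $u^n=\bar v^n$, so Lemma~\ref{lem:CGM} applies to the pair $(u,\bar v)$ and again gives $u^n=\pm a^{n/2}$, whence $z=\pm 2a^{n/2}$ by the same computation. Combining the two cases yields the claimed containment.

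The only mild subtlety — and the step I would flag as the main obstacle — is the second case, where one must notice that replacing $v$ by its Frobenius conjugate $\bar v$ produces a legitimate $(q+1)$st root of $a$ yielding the same $y$, which is what lets Lemma~\ref{lem:CGM} absorb both cases uniformly. Once that observation is in place, the rest is a direct substitution into the Binet-type formula $D_n(x,a)=u^n+a^n/u^n$.
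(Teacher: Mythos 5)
Your argument is correct and follows essentially the same route as the paper: split via Proposition~\ref{prop:dickson-poly-is-2-iff-omegan-is-1} into the cases $\omega(x,a)^n=\omega(y,a)^n$ and $\omega(x,a)^n=\overline{\omega}(y,a)^n$, and in the second case note that $\overline{\omega}(y,a)$ is again a $(q+1)$st root of $a$ so that Lemma~\ref{lem:CGM} applies to it as well. The ``subtlety'' you flag is exactly the step the paper carries out by computing $\overline{\omega}(y,a)^{q+1}=a^{q+1}/\omega(y,a)^{q+1}=a$.
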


\begin{proof}
Suppose that $x\in \mathcal{H}_q(a)$ and $y\in \mathcal{E}_q(a)$ so that $D_n(x,a) = D_n(y,a)$. Then there are some $c$ and $k$ for which
\begin{align*}
    \omega(x,a)^n &= \zeta_{q-1}^{nc} \\
    \omega(y,a)^n &= \zeta_{q^2-1}^{n(A+k(q-1)}.
\end{align*}
In order to have $D_n(x,a) = D_n(y,a)$, by \autoref{prop:dickson-poly-is-2-iff-omegan-is-1} we have that $\omega(x,a)^n = \omega(y,a)^n$ or $\omega(x,a)^n = \bar{\omega}(y,a)^n$.

In the first case, suppose that $\omega(x,a)^n = \omega(y,a)^n$. Since $y$ is elliptic, we have that $\omega(y,a)^{q+1} = a$ by \autoref{prop:omega-properties-for-elliptic-elts}. Therefore by invoking \autoref{lem:CGM} using $u = \omega(x,a)$ and $v = \omega(y,a)$, we have that $\omega(x,a)^n = \omega(y,a)^n =\pm a^{n/2}$. In particular this implies that
\begin{align*}
    D_n(x,a) = D_n(y,a) = \pm 2a^{n/2}.
\end{align*}

In the latter case, if $\omega(x,a)^n = \bar{\omega}(x,a)^n$, we can observe that
\begin{align*}
    \bar{\omega}(y,a)^{q+1} &= \frac{a^{q+1}}{\omega(y,a)^{q+1}} = \frac{a^{q+1}}{a} = a^q = a.
\end{align*}
Invoking \autoref{lem:CGM} with $v = \bar{\omega}(y,a)$, we have then that
\begin{align*}
    \omega(x,a)^n = \bar{\omega}(y,a)^n = \pm a^{n/2},
\end{align*}
and therefore $D_n(x,a) = D_n(y,a) = \pm 2a^{n/2}$.
\end{proof}

\section{Evaluation of the residue sum}
As remarked earlier, our strategy for studying the residue sum $S_q(D_n(x,a))$ will be to investigate the sum over the hyperbolic, elliptic, and parabolic sets, as well as over their overlaps. To this end, we introduce some new notation: for any subset $B \subseteq \F_q$, we denote by $S_q^B(D_n(x,a))$ the sum over the distinct elements in $D_n(B,a)$. If $C \subseteq \F_q$ is another subset, we denote by $S_q^{B, C}(D_n(x,a))$ the sum over distinct elements of $D_n(B,a)\cap D_n(C,a)$, and we have similar notation for triple intersections. In this notation, the total sum will be computed as
\allowdisplaybreaks
\begin{align*}
    S_q(D_n(x,a)) &= S_q^{\mathcal{H}_q(a)}(D_n(x,a)) + S_q^{\mathcal{E}_q(a)}(D_n(x,a)) + S_q^{\mathcal{P}_q(a)}(D_n(x,a)) \\
    & \quad - S_q^{\mathcal{H}_q(a), \mathcal{E}_q(a)}(D_n(x,a)) - S_q^{\mathcal{H}_q(a), \mathcal{P}_q(a)}(D_n(x,a)) \\
& \quad - S_q^{\mathcal{E}_q(a), \mathcal{P}_q(a)}(D_n(x,a)) + S_q^{\mathcal{H}_q(a), \mathcal{E}_q(a), \mathcal{P}_q(a)}(D_n(x,a)).
\end{align*}

Our preliminary observations about the quantities $\omega(x,a)$ as $x$ varies over the hyperbolic and elliptic sets indicate that elements in $D_n(\mathcal{H}_q(a),a)$ and $D_n(\mathcal{E}_q(a),a)$ will be able to be characterized using roots of unity defined over $\F_q$ or its quadratic extension $\F_{q^2}$.

\begin{notation}\label{nota:d-delta} We will see that the residue sums $S_q(D_n(x,a))$ in \autoref{thm:S-sum} depend upon various properties of $n$, in particular the highest power of 2 dividing $n$ and the order of $n$ in $\F_q^\times$ and $\F_{q^2}^\times$ (which relates to divisors shared between $n$ and $q-1$ and $q+1$). To that end, we fix some notation:
\begin{align*}
    d &:= \gcd(n,q-1)  \qquad  m := \frac{n}{d}\\
    \delta &:= \gcd\left(n,q+1\right) \qquad \mu := \frac{n}{\delta}.
\end{align*}
We will also let $2^h$ denote the highest power of 2 dividing $q-1$, $2^\ell$ denote the highest power of $2$ dividing $q+1$, and $2^r$ the highest power of 2 dividing $q^2-1$. 
\end{notation}

We remark the following relationship between $2^r$ and the divisors $d$ and $\delta$ which will come in handy throughout our computations.
\begin{proposition}\label{prop:r-d-delta-relationship} Let $d$, $\delta$, $h$, $\ell$, and $r$ be as in \autoref{nota:d-delta}
\begin{enumerate}
    \item We have $\frac{q-1}{d}$ is odd if and only if $2^h\mid n$.
    \item We have $\frac{q+1}{\delta}$ is odd if and only if $2^\ell \mid n$.
    \item Both $\frac{q-1}{d}$ and $\frac{q+1}{\delta}$ are odd if and only if $2^{r-1} \mid n$.
    \item Both $\frac{n}{d}$ and $\frac{n}{\delta}$ are even if and only if $2^r \mid n$.
\end{enumerate}
\end{proposition}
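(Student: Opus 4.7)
The plan is to reduce all four parts to an exercise in $2$-adic valuations. Write $v_2(\cdot)$ for the $2$-adic valuation, and let $v = v_2(n)$. The entire argument rests on one preliminary observation: since $q$ is odd, both $q-1$ and $q+1$ are even, and $\gcd(q-1, q+1) = 2$. Consequently $\min(h, \ell) = 1$ and $h + \ell = v_2(q^2 - 1) = r$, so $\max(h, \ell) = r - 1$. I would state this at the top of the proof.

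Next, I would record the formulas $v_2(d) = \min(v, h)$ and $v_2(\delta) = \min(v, \ell)$, which follow immediately from $d = \gcd(n, q-1)$, $\delta = \gcd(n, q+1)$, and the behavior of $v_2$ under gcd. With these in hand, parts (1) and (2) are one-liners: $(q-1)/d$ is odd iff $v_2(d) = v_2(q-1) = h$, iff $\min(v, h) = h$, iff $v \geq h$, iff $2^h \mid n$; part (2) is identical with $\ell$ in place of $h$.

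For part (3), combining (1) and (2), both quotients are odd iff $2^h \mid n$ and $2^\ell \mid n$ simultaneously, which is equivalent to $2^{\max(h,\ell)} \mid n$; invoking $\max(h,\ell) = r-1$ finishes the statement. For part (4), compute $v_2(n/d) = v - \min(v,h) = \max(0, v - h)$, which is positive (i.e.\ $n/d$ is even) iff $v \geq h+1$, iff $2^{h+1} \mid n$; analogously $n/\delta$ is even iff $2^{\ell+1} \mid n$. Hence both are even iff $2^{\max(h,\ell)+1} = 2^r$ divides $n$.

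There is no real obstacle here: once the relations $\min(h,\ell) = 1$ and $h + \ell = r$ are in place, everything else is bookkeeping with $\min$ and $\max$ of valuations. The only pitfall is forgetting that $q$ is odd, which is what pins down $\min(h, \ell) = 1$ and makes the identity $\max(h,\ell) = r - 1$ available to link parts (1)--(2) to (3)--(4).
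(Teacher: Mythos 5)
Your proposal is correct and follows essentially the same route as the paper: both hinge on the observation that $\min(h,\ell)=1$, hence $\max(h,\ell)=h+\ell-1=r-1$, and then reduce each part to comparing the power of $2$ dividing $n$ against $h$ and $\ell$. Your systematic use of $2$-adic valuations is merely a cleaner packaging of the paper's ``let $2^s\mid\mid n$'' bookkeeping; no substantive difference.
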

\begin{proof} (1) and (2) follow directly from the definition of $h$ and $\ell$. 

As for (3), we notice that one of $\frac{q+1}{2}$ or $\frac{q-1}{2}$ will be odd, and therefore $h$ and $\ell$ cannot both be strictly greater than one. In particular, this tells us that $\max \left\{ h,\ell \right\} = h+\ell-1 = r-1$, from which the result follows.

For the forward direction of (4), let $2^s \mid\mid n$. Then $\frac{n}{d}$ even implies that $s>h$ and $\frac{n}{\delta}$ even implies that $s>\ell$. In particular $s>\max\{h,\ell\} = r-1$, and hence $s\ge r$.

For the backwards direction of (4), if $2^r\mid n$, then since $r = h+\ell$ and $h,\ell\ge 1$, we have that $2^{h+1}\mid n$ and $2^{\ell+1}\mid n$, implying that both $\frac{n}{d}$ and $\frac{n}{\delta}$ are even.
\end{proof}

It will also benefit us to record some parity constraints that can occur on these values. We will refer back to this result frequently.

\begin{proposition}\label{prop:parity-constraints} Continuing our notation from above:
\begin{enumerate}
    \item Both $\frac{q-1}{d}$ and $\frac{q+1}{\delta}$ cannot be even.
    \item If both $\frac{q-1}{d}$ and $\frac{q+1}{\delta}$ are odd, then we cannot have both $\frac{n}{d}$ and $\frac{n}{\delta}$ odd.
    \item If $2^{r-1}\mid\mid n$, then $\frac{n}{d}$ and $\frac{n}{\delta}$ have opposite parity.
\end{enumerate}
\end{proposition}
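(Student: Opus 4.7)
The plan is to reduce every statement to a short calculation with $2$-adic valuations. Set $s := v_2(n)$; since $d = \gcd(n,q-1)$ and $\delta = \gcd(n,q+1)$, the standard fact about gcds gives $v_2(d) = \min(s,h)$ and $v_2(\delta) = \min(s,\ell)$. The ambient constraint I will exploit throughout is that $q$ odd forces $q-1$ and $q+1$ to be consecutive even integers, so exactly one is $\equiv 2 \pmod 4$ and the other is $\equiv 0 \pmod 4$; equivalently $\min(h,\ell) = 1$ and $\max(h,\ell) = r-1 \geq 2$, so $r = h+\ell \geq 3$. I would open the proof by recording these observations, and then each part follows by comparing $s$ with $h$ and $\ell$.

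For (1), \autoref{prop:r-d-delta-relationship}(1)--(2) tells us $(q-1)/d$ even means $2^h \nmid n$, i.e. $s<h$, and likewise $(q+1)/\delta$ even means $s<\ell$, forcing $s < \min(h,\ell) = 1$. But the standing assumption \autoref{ass:ass} that $n$ is even gives $s \geq 1$, a contradiction. For (2), assume both $(q-1)/d$ and $(q+1)/\delta$ are odd; by \autoref{prop:r-d-delta-relationship}(3) this forces $s \geq r-1$. If additionally both $n/d$ and $n/\delta$ are odd, then $s$ is fully absorbed into $v_2(d)$ and $v_2(\delta)$, so $s \leq h$ and $s \leq \ell$, giving $s \leq 1$. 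Combined with $s \geq r-1 \geq 2$ this is contradictory.

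For (3), the hypothesis $2^{r-1}\,\mid\mid\, n$ gives $s = r-1 = h+\ell-1$, so $s \geq h$ and $s \geq \ell$, whence $v_2(d)=h$ and $v_2(\delta)=\ell$. Therefore $v_2(n/d) = s-h = \ell-1$ and $v_2(n/\delta) = s-\ell = h-1$. Since the unordered pair $\{h,\ell\}$ equals $\{1,\,r-1\}$, exactly one of these two valuations is $0$ and the other is $r-2 \geq 1$, establishing opposite parity.

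The whole argument is routine $2$-adic bookkeeping and I do not expect any serious obstacle. The only subtlety to flag is that parts (2) and (3) genuinely need the inequality $r \geq 3$ (which in turn relies on the fact that one of $q\pm 1$ is divisible by $4$); otherwise one risks a sign error or an off-by-one when converting between "$s \geq r-1$" and "$s \leq \min(h,\ell)$."
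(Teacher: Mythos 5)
Your proof is correct and follows essentially the same route as the paper's: both arguments rest on the observation that exactly one of $q\pm 1$ is $\equiv 2 \pmod{4}$ (so $\min(h,\ell)=1$ and $\max(h,\ell)=r-1$) together with the standing assumption that $n$ is even. Your systematic use of $2$-adic valuations merely streamlines the paper's ad hoc divisibility bookkeeping, and in part (3) it even identifies explicitly which of $n/d$ and $n/\delta$ is the odd one.
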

\begin{proof} The first result follows from the fact that $q$ is an odd prime power, hence one of $\frac{q-1}{2}$ or $\frac{q+1}{2}$ must be odd. In particular since $2|d$ and $2\mid \delta$, one of $\frac{q-1}{d}$ and $\frac{q+1}{\delta}$ must be odd.

For the second observation, we remark that $4\mid (q-1)$ or $4\mid (q+1)$. This implies that either $4\mid d$ or $4\mid \delta$ (since we are assuming both $\frac{q-1}{d}$ and $\frac{q+1}{\delta}$ are odd), and therefore $4\mid n$. However, we must have that $2\mid\mid (q-1)$ or $2\mid\mid (q+1)$, and therefore $2\mid\mid d$ or $2\mid\mid \delta$. In particular there are more powers of $2$ dividing $n$ than divide one of $d$ or $\delta$, and therefore at least one of $\frac{n}{d}$ or $\frac{n}{\delta}$ must be even.

For the third observation, we have by \autoref{prop:r-d-delta-relationship} that $2^{r-1}\mid n$ is equivalent to both $\frac{q-1}{d}$ and $\frac{q+1}{\delta}$ being odd. However $2^r\nmid n$ means that $\frac{n}{d}$ and $\frac{n}{\delta}$ cannot both be even. Via observation (2) of this proposition, they cannot both be odd, therefore they must have opposite parity.
\end{proof}

\subsection{Summing over the hyperbolic and elliptic images}\label{subsec:hyp-sum} 
Using the characterization of the hyperbolic and elliptic sets in \autoref{prop:hyp-ell-para-sets-description}, we can understand the hyperbolic and elliptic images, and therefore their sums.

We first treat the hyperbolic case. Via the Binet formula expansion, we may see that
\begin{equation}\label{eqn:hyperbolic-image}
\begin{aligned}
    D_n(\mathcal{H}_p(a),a) &= \left\{ \zeta_{\frac{q-1}{d}}^{m c} + \zeta_{\frac{q-1}{d}}^{m(A-c)} \colon 1\le c \le q-1,\ 2c\not\equiv A \pmod{q-1} \right\}.
\end{aligned}
\end{equation}
We remark that the residue of $c$ modulo $\frac{q-1}{d}$ matters when recording elements in the hyperbolic image, however the condition $2c\not\equiv A \pmod{q-1}$ is not equivalent to the condition $2c\not\equiv A \pmod{\frac{q-1}{d}}$. So we can have elements $c$ so that $2c\equiv A \pmod{\frac{q-1}{d}}$, but $2c\not \equiv A \pmod{q-1}$. This is how elements like $\pm 2a^{n/2}$ can appear in the hyperbolic image. In order to deal with this, we can provide an alternative description of the hyperbolic image.

\begin{proposition} The hyperbolic image can be described as
\begin{align*}
    &\left\{ \zeta_{\frac{q-1}{d}}^{m c} + \zeta_{\frac{q-1}{d}}^{m (A-c)} \colon 2c\not\equiv A \bmod{\frac{q-1}{d}} \right\}_{c=1}^{\frac{q-1}{d}} \cup \left\{ 2\zeta_{q-1}^{n c} \colon \substack{ 2c\equiv A \bmod{\frac{q-1}{d}} \text{ but} \\ 2c\not \equiv A \bmod{q-1}  }\right\}_{c=1}^{\frac{q-1}{d}}
\end{align*}
\end{proposition}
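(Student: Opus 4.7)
The plan is to verify this alternative description by directly reorganizing the set in \autoref{eqn:hyperbolic-image} according to whether the summand $\zeta_{\frac{q-1}{d}}^{mc} + \zeta_{\frac{q-1}{d}}^{m(A-c)}$ collapses to a single term. The central identity is $\zeta_{\frac{q-1}{d}}^{mc} = \zeta_{q-1}^{dmc} = \zeta_{q-1}^{nc}$, which uses $n = dm$ and $\zeta_{\frac{q-1}{d}} = \zeta_{q-1}^{d}$; in particular, the summand depends only on $c$ modulo $\frac{q-1}{d}$, so it suffices to group the indexing set in \autoref{eqn:hyperbolic-image} into residue classes modulo $\frac{q-1}{d}$.

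I would then partition these residue classes according to whether $2c \equiv A \pmod{\frac{q-1}{d}}$. In the first case, when $2c \not\equiv A \pmod{\frac{q-1}{d}}$, the stronger condition $2c \not\equiv A \pmod{q-1}$ holds automatically (via the contrapositive of $\frac{q-1}{d} \mid (q-1)$), so every lift of $c$ is admissible in the original description and the contribution is exactly the first set in the proposition. In the second case, when $2c \equiv A \pmod{\frac{q-1}{d}}$, the congruence $A - c \equiv c \pmod{\frac{q-1}{d}}$ forces the two exponents to coincide modulo $\frac{q-1}{d}$, so the summand collapses to $2\zeta_{\frac{q-1}{d}}^{mc} = 2\zeta_{q-1}^{nc}$. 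This value appears in $D_n(\mathcal{H}_q(a),a)$ precisely when the residue class of $c$ admits some lift satisfying $2c \not\equiv A \pmod{q-1}$, which is the defining criterion of the second set. Both inclusions then follow by reversing these manipulations.

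The main bookkeeping obstacle is in the second case: one must carefully distinguish the two moduli $\frac{q-1}{d}$ and $q-1$ so that the collapsed values $2\zeta_{q-1}^{nc}$ which genuinely appear in the hyperbolic image are captured, while those entirely excluded by the original requirement $2c \not\equiv A \pmod{q-1}$ are correctly omitted. This reduces to a finite count of representatives $c + j \tfrac{q-1}{d}$ for $j = 0,\ldots,d-1$ lying in $\{1,\ldots,q-1\}$ that simultaneously satisfy $2c \equiv A \pmod{q-1}$, a count controlled by the parity of $d$ (which, under \autoref{ass:ass}, is necessarily even since both $n$ and $q-1$ are even).
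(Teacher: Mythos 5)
Your reorganization is correct and matches the paper's own (essentially unwritten) justification: the proposition is stated without a separate proof, resting on exactly the observations you make — the summand depends only on $c$ modulo $\frac{q-1}{d}$, the condition $2c\not\equiv A \pmod{\frac{q-1}{d}}$ automatically forces $2c\not\equiv A \pmod{q-1}$, and the collapsed terms $2\zeta_{q-1}^{nc}$ survive precisely when the residue class admits a lift avoiding the excluded congruence modulo $q-1$. Your existential reading of the second set's condition is the intended one, and the residual count over lifts $c+j\tfrac{q-1}{d}$ that you defer is exactly what the paper carries out afterwards in \autoref{prop:hyperbolic-image}.
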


Thus to characterize the hyperbolic image, it suffices to understand when these congruences can be solved. As we see in the following proposition, this depends upon the parity of $A$ and $\frac{q-1}{d}$, as well as whether or not $d=2$.
\begin{proposition}\label{prop:hyperbolic-image} We have that the hyperbolic image $D_n(\mathcal{H}_q(a),a)$ is equal to
    \begin{align*}
     \begin{cases}
    \left\{2 \legendre{a}{q}^{n/d} a^{n/2}\right\} & d=q-1; \\
    \left\{ \zeta_{\frac{q-1}{d}}^{m c} + \zeta_{\frac{q-1}{d}}^{m (A-c)} \right\}_{c=1}^{\frac{q-1}{d}} & A\text{ odd, } \frac{q-1}{d} \text{ even}; \\
    \left\{ \zeta_{\frac{q-1}{d}}^{m c} + \zeta_{\frac{q-1}{d}}^{m (A-c)} \colon c\not\equiv \frac{1}{2}\left( A + \frac{q-1}{d} \right) \bmod{\frac{q-1}{d}} \right\}_{c=1}^{\frac{q-1}{d}} \cup \left\{  2(-1)^{n/d} a^{n/2} \right\} & A\text{ odd, } \frac{q-1}{d} \text{ odd}; \\
    \left\{ \zeta_{\frac{q-1}{d}}^{m c} + \zeta_{\frac{q-1}{d}}^{m (A-c)} \colon c\not\equiv \frac{A}{2} \bmod{\frac{q-1}{d}} \right\}_{c=1}^{\frac{q-1}{d}} & A\text{ even, } \frac{q-1}{d} \text{ odd, } d=2; \\
   \left\{ \zeta_{\frac{q-1}{d}}^{m c} + \zeta_{\frac{q-1}{d}}^{m (A-c)} \colon c\not\equiv \frac{A}{2} \bmod{\frac{q-1}{d}} \right\}_{c=1}^{\frac{q-1}{d}} \cup \left\{ 2 a^{n/2} \right\} & A\text{ even, } \frac{q-1}{d} \text{ odd, } d\ne 2; \\
    \left\{ \zeta_{\frac{q-1}{d}}^{m c} + \zeta_{\frac{q-1}{d}}^{m (A-c)} \colon c\not\equiv \frac{A}{2}, \frac{A}{2} + \frac{q-1}{2d} \bmod{\frac{q-1}{d}} \right\}_{c=1}^{\frac{q-1}{d}} \cup \left\{ -2a^{n/2} \right\} & A\text{ even, } \frac{q-1}{d} \text{ even, } d=2;    \\
   \left\{ \zeta_{\frac{q-1}{d}}^{m c} + \zeta_{\frac{q-1}{d}}^{m (A-c)} \colon c\not\equiv \frac{A}{2}, \frac{A}{2} + \frac{q-1}{2d} \bmod{\frac{q-1}{d}} \right\}_{c=1}^{\frac{q-1}{d}} \cup \left\{ 2a^{n/2},\ -2a^{n/2}\right\} & A\text{ even, } \frac{q-1}{d} \text{ even, } d\ne 2.
   \end{cases}
\end{align*}
\end{proposition}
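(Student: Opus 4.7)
The proof proceeds from the alternative description of $D_n(\mathcal{H}_q(a),a)$ stated in the immediately preceding proposition, which splits the image into a ``generic'' piece parametrized by residues $c \pmod{\frac{q-1}{d}}$ satisfying $2c\not\equiv A\pmod{\frac{q-1}{d}}$, together with an ``extra'' piece
\begin{align*}
E := \left\{ 2\zeta_{q-1}^{nc}\colon 2c\equiv A \pmod{\tfrac{q-1}{d}}\text{ but }2c\not\equiv A \pmod{q-1}\right\}.
\end{align*}
The plan is to (i) identify the excluded residues of the generic piece by solving $2c\equiv A\pmod{\frac{q-1}{d}}$, then (ii) determine which solutions $c^*$ admit a lift modulo $q-1$ violating $2c\equiv A\pmod{q-1}$, and finally (iii) compute $2\zeta_{q-1}^{nc^*}$ explicitly. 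The casework is driven by the parities of $A$ and $\frac{q-1}{d}$, with a further split on whether $d=2$.

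Step (i) is elementary modular arithmetic: if $\frac{q-1}{d}$ is odd then a unique solution exists, namely $c^* = \frac{A+(q-1)/d}{2}$ when $A$ is odd and $c^* = A/2$ when $A$ is even; if $\frac{q-1}{d}$ is even then no solution exists when $A$ is odd, and two solutions $\frac{A}{2}$ and $\frac{A}{2}+\frac{q-1}{2d}$ exist when $A$ is even. This immediately produces the listed exclusions on the generic piece in each of the seven cases. For step (ii), writing $2c^* = A + t\cdot\frac{q-1}{d}$ for a chosen representative, the condition $2c\equiv A\pmod{q-1}$ on a lift $c^*+k\frac{q-1}{d}$ becomes $t+2k\equiv 0\pmod{d}$. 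When $A$ is odd this congruence has no solutions (since $q-1$ is even), so every lift gives a member of $E$; when $A$ is even, the solution $c^*=A/2$ has $t=0$ and admits a non-satisfying lift iff $d>2$, while $c^{**}=A/2+\frac{q-1}{2d}$ has $t=1$ and never admits a satisfying lift (since $d$ is even), so $c^{**}$ always contributes when present.

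For step (iii), substituting $2c^*=A+t\cdot\frac{q-1}{d}$ into the exponent and using $\zeta_{q-1}^{(q-1)/2}=-1$ yields
\begin{align*}
2\zeta_{q-1}^{nc^*}=2a^{n/2}\cdot(-1)^{tm},\qquad m=n/d,
\end{align*}
so the sign is controlled by $(-1)^{tm}$. This gives $2(-1)^{n/d}a^{n/2}$ in the ``$A$ odd, $\frac{q-1}{d}$ odd'' case ($t$ odd), $2a^{n/2}$ in the ``$A$ even, $\frac{q-1}{d}$ odd, $d\neq 2$'' case ($t=0$), and $2a^{n/2}$ respectively $-2a^{n/2}$ for the $c^*$- and $c^{**}$-contributions in the ``$A$ even, $\frac{q-1}{d}$ even'' cases. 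The latter relies on the elementary fact that $\frac{q-1}{d}$ even forces $n/d$ odd (otherwise $2d$ would be a common divisor of $n$ and $q-1$ larger than $d$). Finally, the degenerate case $d=q-1$ is handled separately: there $\frac{q-1}{d}=1$ makes the generic piece empty, $(q-1)\mid n$ forces $\zeta_{q-1}^{nc}=1$, and $a^n=1$, so the single extra element is $2 = 2\legendre{a}{q}^{n/d}a^{n/2}$. The main obstacle is organizational bookkeeping: managing the seven-way case split cleanly and verifying each sign via the identity $2c^*=A+t\cdot\frac{q-1}{d}$; none of the individual computations is deep.
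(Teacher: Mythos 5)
Your proposal is correct and follows essentially the same route as the paper's proof: solve $2c\equiv A\pmod{\frac{q-1}{d}}$ by parity casework, check which solutions lift to violate $2c\equiv A\pmod{q-1}$ (the paper writes the lift as $c+\ell\frac{q-1}{d}$ and uses that $d$ is even exactly where you use $t+2k\equiv 0\pmod d$), and evaluate $2\zeta_{q-1}^{nc^*}=2(-1)^{tm}a^{n/2}$, including the same observation that $\frac{q-1}{d}$ even forces $\frac{n}{d}$ odd and the same rewriting of $2$ as $2\legendre{a}{q}^{n/d}a^{n/2}$ when $d=q-1$. Your uniform bookkeeping via the parameter $t$ is a slight streamlining of the paper's case-by-case computation but not a different argument.
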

\begin{proof} We know that solutions to $2c\equiv A \pmod{\frac{q-1}{d}}$ exist if and only if $\gcd \left( 2, \frac{q-1}{d} \right)$ divides $A$, and in this setting there are precisely $\gcd \left( 2, \frac{q-1}{d} \right)$ such solutions.
\begin{enumerate}
    \item In the case that $d=q-1$, we have that the hyperbolic image is simply $\{2\}$. However, since $d=q-1$, we can write $n = \frac{n}{d}(q-1)$, from which we can see that 
    \begin{align*}
    a^{n/2} &= \left( a^{\frac{q-1}{2}} \right)^{\frac{n}{d}} = \legendre{a}{q}^{n/d}.
\end{align*}
Since these are both congruent to $\pm 1$, they square to 1, so we may rewrite $2 = 2 \legendre{a}{q}^{n/d} a^{n/2}$. When discussing potential overlap in the hyperbolic and elliptic images later, it will benefit us to characterize the hyperbolic image in this a priori more convoluted form.
    
    \item In this case $\gcd \left( 2, \frac{q-1}{d} \right)$ is even, which does not divide $A$ since it is odd. Thus there are no solutions.

    \item If $A$ is odd and $\frac{q-1}{d}$ is odd, then there is a unique solution of the form $c = \frac{1}{2}\left( A + \frac{q-1}{d} \right) + \ell \frac{q-1}{d}$ for some $\ell$. Multiplying this equality by $2$ we obtain
    \begin{align*}
        2c = A + \frac{q-1}{d} + 2 \ell\frac{q-1}{d} = A + \left( 2\ell + 1 \right) \frac{q-1}{d}.
    \end{align*}
    We note that $(2\ell + 1)$ is odd, while $d$ is always even. Therefore $\frac{2\ell + 1}{d}$ will never be an integer, and thus $2c\not \equiv A \pmod{q-1}$. Plugging in this $c$, we obtain
    \begin{align*}
     2\zeta_{\frac{q-1}{d}}^{\mu \left( \frac{1}{2}\left( A + \frac{q-1}{d} \right) + \ell \frac{q-1}{d} \right)} &= 2 \zeta_{\frac{q-1}{d}}^{\mu \frac{1}{2} \left( A + \frac{q-1}{d} \right)} \zeta_{\frac{q-1}{d}}^{\mu \ell \frac{q-1}{d}} = 2 \zeta_{q-1}^{\frac{n}{2} (A + \frac{q-1}{d})} \\
     &= 2\zeta_{q-1}^{A \frac{n}{2}} \zeta_{q-1}^{\frac{q-1}{2} \frac{n}{d}} = 2(-1)^{n/d} a^{n/2}.
\end{align*}

\item If $A$ is even and $\frac{q-1}{d}$ is odd, then there is a unique solution, namely $c \equiv \frac{A}{2} \pmod{\frac{q-1}{d}}$. Any such solution will be an integer of the form $c = \frac{A}{2} + \ell \frac{q-1}{d}$ for some $\ell$, so we may multiply by 2 to obtain
\begin{align*}
    2c = A + 2\ell \frac{q-1}{d}.
\end{align*}
If $d=2$, then this solution yields $2c\equiv A \pmod{q-1}$, so we must omit this value. In this case we see that
\begin{align*}
    D_n(\mathcal{H}_q(a),a) &= \left\{ \zeta_{\frac{q-1}{d}}^{m c} + \zeta_{\frac{q-1}{d}}^{m (A-c)} \colon c\not\equiv \frac{A}{2} \bmod{\frac{q-1}{d}} \right\}_{c=1}^{\frac{q-1}{d}}.
\end{align*}

If $d\ne 2$, then it is not the case that $c$ has to satisfy $2c\equiv A \pmod{q-1}$. This tells us that
\begin{align*}
    D_n(\mathcal{H}_q(a),a) &=  \left\{ \zeta_{\frac{q-1}{d}}^{m c} + \zeta_{\frac{q-1}{d}}^{m (A-c)} \colon c\not\equiv \frac{A}{2} \bmod{\frac{q-1}{d}} \right\}_{c=1}^{\frac{q-1}{d}} \cup \left\{ 2 \zeta_{\frac{q-1}{d}}^{m \frac{A}{2}} \right\}.
\end{align*}
Here we compute that $\zeta_{\frac{q-1}{d}}^{m \frac{A}{2}} = \zeta_{q-1}^{A \frac{n}{2}} = a^{n/2}$.

\item If $A$ is even and $\frac{q-1}{d}$ is even, then there are two solutions, namely $c \equiv \frac{A}{2} \bmod{\frac{q-1}{d}}$ and $c \equiv \frac{A}{2} + \frac{q-1}{2d} \bmod{\frac{q-1}{d}}$. Let's look at these two solutions individually.
\begin{enumerate}
    \item For the case $c\equiv \frac{A}{2}$, we have that $c$ is an integer of the form $c = \frac{A}{2} + \ell \frac{q-1}{d}$. Multiplying by $2$ we obtain $2c = A + 2\ell \frac{q-1}{d}$. If $d=2$, we have that $2c\equiv A \pmod{q-1}$, so this $c$ yields a parabolic element.

    \item For the case $c\equiv \frac{A} {2} + \frac{q-1}{2d}$, we have that $c = \frac{A}{2} + \frac{q-1}{2d} + \ell \frac{q-1}{d}$ for some $\ell$. Multiplying by $2$ yields
    \begin{align*}
        2c = A + (2\ell + 1)\frac{q-1}{d}.
    \end{align*}
    As $2\ell + 1$ is odd and $d$ is even, this choice of $c$ will never satisfy $2c \equiv A \pmod{q-1}$.
\end{enumerate}

\end{enumerate}

\end{proof}

\begin{corollary} The size of the hyperbolic set is:
\begin{align*}
    |D_n(\mathcal{H}_q(a),a)| &= \floor{\frac{q-1}{2d}} + \begin{cases}
    1 & A\cdot \frac{q-1}{d} \text{ odd}; \\
    1 & A \text{ even, and } d\ne 2; \\
    0 & \text{otherwise}. \end{cases}
\end{align*}
\end{corollary}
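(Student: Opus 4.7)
The approach is to count, in each of the seven branches of \autoref{prop:hyperbolic-image}, the distinct elements of the hyperbolic image. The central tool is that for each $A$ and $c \in \Z/\frac{q-1}{d}\Z$, the product
\[
    \zeta_{\frac{q-1}{d}}^{mc} \cdot \zeta_{\frac{q-1}{d}}^{m(A-c)} = \zeta_{\frac{q-1}{d}}^{mA}
\]
is independent of $c$. By Vieta's formulas applied to the monic quadratic whose roots are these two summands, two indices $c_1, c_2$ yield the same sum if and only if $c_1 \equiv c_2$ or $c_1 \equiv A - c_2 \pmod{\frac{q-1}{d}}$. Hence for any index set $I \subseteq \Z/\frac{q-1}{d}\Z$, the number of distinct values is the number of orbits of the involution $\iota \colon c \mapsto A - c$ on $I$, given by $\tfrac{1}{2}(|I| + |I \cap F|)$, where $F := \{c : 2c \equiv A \pmod{\frac{q-1}{d}}\}$ is the fixed-point set of $\iota$.

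A routine count gives $|F| = 1$ when $\frac{q-1}{d}$ is odd, $|F| = 0$ when $\frac{q-1}{d}$ is even and $A$ is odd, and $|F| = 2$ when both are even. Plugging this together with the explicit $|I|$ and the description of $I \cap F$ supplied by each branch of \autoref{prop:hyperbolic-image} into the orbit formula produces the main contribution to $|D_n(\mathcal{H}_q(a),a)|$. In the branches that also adjoin one of $\pm 2 a^{n/2}$, I would then verify by another Vieta-type check that such points do not already appear in the main indexed family: if $\zeta_{\frac{q-1}{d}}^{mc} + \zeta_{\frac{q-1}{d}}^{m(A-c)} = 2\epsilon a^{n/2}$ for some $\epsilon = \pm 1$, then since the product equals $a^n = (\epsilon a^{n/2})^2$, both summands must coincide with $\epsilon a^{n/2}$, forcing $c \in F$. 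Since the branches in which $\pm 2 a^{n/2}$ is adjoined are precisely those in which the corresponding fixed points have been excluded from $I$, the unions are disjoint and contribute $+1$ or $+2$ to the count.

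Matching the resulting totals branch by branch to the piecewise conditional in the statement then completes the proof. The degenerate branch $d = q - 1$ deserves a separate remark: here $\frac{q-1}{d} = 1$, the index set $I$ consists of a single fixed point, the identity $a^{n/2} = \legendre{a}{q}^{n/d}$ of \autoref{prop:hyperbolic-image} forces this element to equal $2\legendre{a}{q}^{n/d} a^{n/2}$, and the $+1$ from the conditional (applicable since \autoref{ass:ass} gives $q \neq 3$, hence $d \neq 2$ when $A$ is even) accounts for this single contribution. The main obstacle is therefore purely bookkeeping across the seven branches, with no new mathematical content beyond the orbit count and the Vieta-type uniqueness for the preimages of $\pm 2 a^{n/2}$.
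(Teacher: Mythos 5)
Your proof is correct and is exactly the derivation the paper intends: the corollary is stated without proof immediately after \autoref{prop:hyperbolic-image}, and counting orbits of the involution $c\mapsto A-c$ on each index set, plus the Vieta-type check that the adjoined points $\pm 2a^{n/2}$ are genuinely new, is the natural way to extract the cardinalities from that proposition. The only step worth making explicit is that your equivalence ``same sum iff $c_1\equiv c_2$ or $c_1\equiv A-c_2 \pmod{\tfrac{q-1}{d}}$'' uses $\gcd\bigl(m,\tfrac{q-1}{d}\bigr)=1$, which holds automatically since $d=\gcd(n,q-1)$.
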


\begin{lemma}\label{lem:labelname} The hyperbolic sum is
\begin{align*}
    S_q^{\mathcal{H}_q(a)}(D_n(x,a)) &= \begin{cases}
    2 \legendre{a}{q}^{n/d} a^{n/2} & d = q-1; \\
    0 & A\text{ odd, } \frac{q-1}{d} \text{ even}; \\
   (-1)^{n/d} a^{n/2} & A\text{ odd, } \frac{q-1}{d} \text{ odd}; \\
    -a^{n/2} & A\text{ even, } \frac{q-1}{d} \text{ odd, } d=2; \\
    a^{n/2} & A\text{ even, } \frac{q-1}{d} \text{ odd, } d\ne 2; \\
    -2a^{n/2} & A\text{ even, } \frac{q-1}{d} \text{ even, } d=2;    \\
    0 & A\text{ even, } \frac{q-1}{d} \text{ even, } d\ne 2.
   \end{cases}
\end{align*}
\end{lemma}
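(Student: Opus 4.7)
I would compute the sum directly from the piecewise description of the hyperbolic image in Proposition~\ref{prop:hyperbolic-image}, exploiting orthogonality of roots of unity. The key structural input is that $\gcd(m,N)=1$, where $m=n/d$ and $N=(q-1)/d$; this is immediate from $d=\gcd(n,q-1)$. In particular $\zeta_N^{m}$ is again a primitive $N$-th root of unity, so
\[
T:=\sum_{c=1}^{N}\bigl(\zeta_N^{mc}+\zeta_N^{m(A-c)}\bigr)=
\begin{cases} 0 & N>1, \\ 2 & N=1. \end{cases}
\]
The symmetric form of $T$ reflects the involution $c\mapsto A-c\pmod{N}$: its fixed points (solutions of $2c\equiv A\pmod{N}$) contribute once to $T$, while all other orbits contribute twice. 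Writing $F$ for the sum of the values $2\zeta_N^{mc_0}$ over fixed points $c_0$, the sum of distinct values in the unrestricted symmetric set is therefore $(T+F)/2$.

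Next I would compute the explicit fixed-point values using $\zeta_N^{mc_0}=\zeta_{q-1}^{nc_0}$ together with the identities $\zeta_{q-1}^{nA/2}=a^{n/2}$ and $\zeta_{q-1}^{nN/2}=(-1)^{n/d}$. These yield value $a^{n/2}$ at $c_0=A/2$ and value $(-1)^{n/d}a^{n/2}$ at $c_0=(A+N)/2$ or $c_0=A/2+N/2$. The degenerate case $d=q-1$ (i.e.\ $N=1$) is handled on its own, since the hyperbolic image is already described in Proposition~\ref{prop:hyperbolic-image} as the singleton $\{2\legendre{a}{q}^{n/d}a^{n/2}\}$, whose sum is itself.

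It then remains to match the remaining six cases of Proposition~\ref{prop:hyperbolic-image}: in each, subtract from $(T+F)/2$ the fixed-point values that are excluded from the main indexing set, and add any extras of the form $\pm 2a^{n/2}$ prescribed by the proposition. The principal obstacle is the sub-case in which $A$ and $N=(q-1)/d$ are both even, where two fixed-point orbits contribute and extra elements $\pm 2a^{n/2}$ appear. The clean way through is to observe that $N$ even forces $v_2(d)<v_2(q-1)$, so $v_2(d)=v_2(n)$ and $n/d$ is odd. Consequently the two fixed-point values $a^{n/2}$ and $(-1)^{n/d}a^{n/2}$ cancel to give $F=0$, and the remaining bookkeeping collapses to $-2a^{n/2}$ when $d=2$ and to $0$ when $d\neq 2$. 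The other sub-cases reduce to a direct substitution of the appropriate $F$ and the excluded or added elements.
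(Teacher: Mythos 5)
Your proposal is correct and follows essentially the same route as the paper: the paper's proof simply sums the explicit image sets from Proposition~\ref{prop:hyperbolic-image} case by case, using that $\tfrac{n}{d}$ is odd whenever $\tfrac{q-1}{d}$ is even, which is exactly your $(T+F)/2$ bookkeeping with the vanishing root-of-unity sum and the fixed points of $c\mapsto A-c$ made explicit. The only detail worth flagging is that your formula for the sum over distinct values tacitly uses that distinct orbits of the involution yield distinct image values (which follows from Proposition~\ref{prop:dickson-poly-is-2-iff-omegan-is-1} and $\gcd(m,\tfrac{q-1}{d})=1$), the same implicit fact the paper relies on.
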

\begin{proof} We may sum over the hyperbolic image as computed in \autoref{prop:hyperbolic-image} to obtain
\begin{align*}
    S_q^{\mathcal{H}_q(a)}(D_n(x,a)) &= \begin{cases}
    2 \legendre{a}{q}^{n/d} a^{n/2} & d = q-1; \\
    0 & A\text{ odd, } \frac{q-1}{d} \text{ even}; \\
   (-1)^{n/d} a^{n/2} & A\text{ odd, } \frac{q-1}{d} \text{ odd}; \\
    -a^{n/2} & A\text{ even, } \frac{q-1}{d} \text{ odd, } d=2; \\
    a^{n/2} & A\text{ even, } \frac{q-1}{d} \text{ odd, } d\ne 2; \\
    -a^{n/2} +(-1)^{n/d} a^{n/2} & A\text{ even, } \frac{q-1}{d} \text{ even, } d=2 ;   \\
    a^{n/2} + (-1)^{n/d}a^{n/2} & A\text{ even, } \frac{q-1}{d} \text{ even, } d\ne 2.
   \end{cases}
\end{align*}
In the latter two cases, $\frac{n}{d}$ is odd since $\frac{q-1}{d}$ is even, yielding the statement of the proposition.
\end{proof}

A similar analysis can be used to characterize the elliptic image. We observe via \autoref{prop:hyp-ell-para-sets-description} and the Binet formula that the elliptic image is
\allowdisplaybreaks
\begin{equation}\label{eqn:elliptic-image}
\begin{aligned}
    &\quad\quad D_n(\mathcal{E}_p(a),a) = \left\{ \zeta_{q^2-1}^{n(A+k(q-1))}+\zeta_{q^2-1}^{n(Aq-k(q-1))}: 1\le k\le \frac{q+1}{\delta}\ \mathrm{and}\ 2k \not\equiv A \bmod{q+1} \right\}
 \\
    &=\left\{ \zeta_{q^2-1}^{nA} \zeta_{\frac{q+1}{\delta}}^{\mu k} + \zeta_{q^2-1}^{nAq} \zeta_{\frac{q+1}{\delta}}^{-\mu k}\colon 1\le k\le \frac{q+1}{\delta},\ 2k\not\equiv A \pmod{\frac{q+1}{\delta}} \right\} \cup \left\{ 2\zeta_{q^2-1}^{nA} \zeta_{\frac{q+1}{\delta}}^{\mu k} \colon \substack{ 2k\equiv A \bmod{\frac{q+1}{\delta}} \text{ but} \\ 2k\not \equiv A \bmod{q+1}  } \right\}.
\end{aligned}
\end{equation}

Again we may better characterize this in various cases.

\begin{proposition}\label{prop:elliptic-image} We have that the elliptic image $D_n(\mathcal{E}_q(a),a)$ is equal to
\begin{align*}
    \begin{cases}
    \left\{ 2\legendre{a}{q}^{n/\delta} a^{n/2} \right\} & \delta=q+1; \\
    \left\{ \zeta_{q^2-1}^{nA} \zeta_{\frac{q+1}{\delta}}^{\mu k} + \zeta_{q^2-1}^{nAq} \zeta_{\frac{q+1}{\delta}}^{-\mu k} \right\}_{k=1}^{\frac{q+1}{\delta}} & A\text{ odd, } \frac{q+1}{\delta}\text{ even}; \\
    % 2
    \left\{ \zeta_{q^2-1}^{nA} \zeta_{\frac{q+1}{\delta}}^{\mu k} + \zeta_{q^2-1}^{nAq} \zeta_{\frac{q+1}{\delta}}^{-\mu k}\colon k \ne \frac{1}{2}\left(  A + \frac{q+1}{\delta} \right) \right\}_{k=1}^{\frac{q+1}{\delta}}\cup \left\{ 2(-1)^{n/\delta} a^{n/2}\right\} & A\text{ odd, } \frac{q+1}{\delta}\text{ odd}; \\
    % 3
    \left\{ \zeta_{q^2-1}^{nA} \zeta_{\frac{q+1}{\delta}}^{\mu k} + \zeta_{q^2-1}^{nAq} \zeta_{\frac{q+1}{\delta}}^{-\mu k}\colon k \ne \frac{A}{2} \right\}_{k=1}^{\frac{q+1}{\delta}} & A\text{ even, } \frac{q+1}{\delta}\text{ odd, } \delta=2; \\
    % 4
    \left\{ \zeta_{q^2-1}^{nA} \zeta_{\frac{q+1}{\delta}}^{\mu k} + \zeta_{q^2-1}^{nAq} \zeta_{\frac{q+1}{\delta}}^{-\mu k}\colon k\ne \frac{A}{2} \right\}_{k=1}^{\frac{q+1}{\delta}}\cup \left\{ 2a^{n/2} \right\} & A\text{ even, } \frac{q+1}{\delta}\text{ odd, } \delta\ne 2; \\
    % 5
    \left\{ \zeta_{q^2-1}^{nA} \zeta_{\frac{q+1}{\delta}}^{\mu k} + \zeta_{q^2-1}^{nAq} \zeta_{\frac{q+1}{\delta}}^{-\mu k}\colon k \ne \frac{A}{2},\ \frac{A}{2} + \frac{q+1}{2\delta} \right\}_{k=1}^{\frac{q+1}{\delta}}\cup \left\{ -2a^{n/2} \right\} 
 & A\text{ even, } \frac{q+1}{\delta}\text{ even, } \delta=2; \\
    % 6
    \left\{ \zeta_{q^2-1}^{nA} \zeta_{\frac{q+1}{\delta}}^{\mu k} + \zeta_{q^2-1}^{nAq} \zeta_{\frac{q+1}{\delta}}^{-\mu k}\colon k \ne \frac{A}{2},\ \frac{A}{2} + \frac{q+1}{2\delta} \right\}_{k=1}^{\frac{q+1}{\delta}}\cup \left\{ 2a^{n/2}, -2a^{n/2} \right\}
 & A\text{ even, } \frac{q+1}{\delta}\text{ even, } \delta\ne 2. \\
    \end{cases}
\end{align*}

\end{proposition}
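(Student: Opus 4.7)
The plan is to mirror the proof of \autoref{prop:hyperbolic-image}, now applied to the description of the elliptic image in \autoref{eqn:elliptic-image}. The first step is solvability analysis: the congruence $2k \equiv A \pmod{(q+1)/\delta}$ has solutions iff $\gcd(2, (q+1)/\delta) \mid A$, yielding four primary regimes based on the parities of $A$ and $(q+1)/\delta$. When $A$ is odd and $(q+1)/\delta$ is even there are no solutions; when both are odd there is a unique $k_0 \equiv \frac{1}{2}(A + (q+1)/\delta) \pmod{(q+1)/\delta}$; when $A$ is even and $(q+1)/\delta$ is odd there is a unique $k_0 \equiv A/2$; and when both are even there are two solutions, $k_0 \equiv A/2$ and $k_0 \equiv A/2 + (q+1)/(2\delta)$.

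For each candidate $k_0$, we must decide whether some representative in $\{1, \ldots, q+1\}$ lying in the class $k_0 \bmod (q+1)/\delta$ satisfies $2k \not\equiv A \pmod{q+1}$, which is required to avoid being parabolic. Writing $k = k_0 + j(q+1)/\delta$ for $j = 0, \ldots, \delta-1$, we compute $2k - A = (2k_0 - A) + 2j(q+1)/\delta$. For the solutions $k_0 = \frac{1}{2}(A + (q+1)/\delta)$ and $k_0 = A/2 + (q+1)/(2\delta)$, the quantity $2k - A$ is always an odd multiple of $(q+1)/\delta$; since $\delta$ is even, this can never be a multiple of $q+1$, so the doubled contribution survives unconditionally. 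For $k_0 = A/2$, we have $2k - A = 2j(q+1)/\delta$, which is a multiple of $q+1$ exactly when $j \equiv 0$ or $\delta/2 \pmod \delta$; thus the doubled contribution survives iff $\delta > 2$, producing the $\delta = 2$ versus $\delta \ne 2$ dichotomy in the statement.

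Third, we identify each surviving doubled term by writing it as $2\zeta_{q^2-1}^{n(A + k_0(q-1))}$ and simplifying via the identities $\zeta_{q^2-1}^{(q+1)m} = \zeta_{q-1}^m$, $\zeta_{q-1}^A = a$, and $\zeta_{q^2-1}^{(q^2-1)/2} = -1$. A short computation gives that $k_0 = A/2$ contributes $2a^{n/2}$, whereas the other two solution types, $k_0 = \frac{1}{2}(A + (q+1)/\delta)$ and $k_0 = A/2 + (q+1)/(2\delta)$, each contribute $2(-1)^{n/\delta}a^{n/2}$. In the regime where both $A$ and $(q+1)/\delta$ are even, the coprimality $\gcd(\mu,(q+1)/\delta) = 1$ forces $\mu = n/\delta$ to be odd, so $(-1)^{n/\delta} = -1$ and this contribution collapses to $-2a^{n/2}$, exactly as claimed.

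Finally, the boundary case $\delta = q+1$ is handled separately, analogously to the hyperbolic proof. Here $(q+1)/\delta = 1$ collapses all $k$ to a single residue class, yielding the single doubled element $2\zeta_{q^2-1}^{nA} = 2a^{n/\delta}$, which equals $2\chi_q(a)^{n/\delta}a^{n/2}$ because $a^{q-1} = 1$ in $\F_q^\times$ forces $\chi_q(a)^{n/\delta}$ and the discrepancy $a^{n/\delta - n/2}$ to coincide. The main obstacle will be the careful bookkeeping of powers of $2$ in $\delta$, $(q+1)/\delta$, and $n$ needed to separate all seven cases of the statement; once this divisibility ledger is in hand, every simplification reduces to repeated application of the cyclotomic identities above.
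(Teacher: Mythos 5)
Your proposal is correct and follows essentially the same route as the paper's proof: solve $2k\equiv A\pmod{\frac{q+1}{\delta}}$ in the four parity regimes, test each solution class against the stricter condition $2k\not\equiv A\pmod{q+1}$ (where your explicit enumeration over $j=0,\dots,\delta-1$ matches the paper's ``$(2\ell+1)/\delta$ is never an integer'' argument and yields the same $\delta=2$ versus $\delta\neq2$ dichotomy), and identify the surviving doubled terms via the same cyclotomic simplifications, including the $\legendre{a}{q}^{n/\delta}a^{n/2}$ rewriting when $\delta=q+1$ and the observation that $\gcd(\mu,\frac{q+1}{\delta})=1$ forces $n/\delta$ odd in the last two cases.
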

\begin{proof} We can solve for the congruence $2k\equiv A \bmod{\frac{q+1}{\delta}}$.
\begin{enumerate}
    \item When $\delta = q+1$, we can write $n = \frac{n}{\delta}(q+1)$, from which we see that any element in the elliptic image takes the form
\begin{align*}
    \zeta_{q^2-1}^{n \left( A + k (q-1) \right)} + \zeta_{q^2-1}^{n \left( Aq - k(q-1) \right)} &= 2\zeta_{q^2-1}^{\frac{n}{\delta} A(q+1)} =2a^{n/\delta}.
\end{align*}
We remark that if $\delta = q+1$, we may write
\begin{align*}
    a^{n/2} &= \left( a^{n/\delta} \right)^{\delta/2} = \left( a^{n/\delta} \right)^{\frac{q+1}{2}} = a^{n/\delta} \left( a^{n/\delta} \right)^{\frac{q-1}{2}} = a^{n/\delta} \legendre{a^{n/\delta}}{q}.
\end{align*}
We may verify that $\legendre{a^{n/\delta}}{q} = \legendre{a}{q}^{n/\delta}$, from which we compute
\begin{align*}
    a^{n/\delta} = \legendre{a}{q}^{n/\delta} a^{n/2}.
\end{align*}

    \item In this case there are no solutions to $2k\equiv A \pmod{\frac{q+1}{\delta}}$.
    
    \item In this case, there is a unique solution, namely an integer of the form $k = \frac{1}{2} \left( A + \frac{q+1}{\delta} \right) + \ell \frac{q+1}{\delta}$ for some $\ell$. Multiplying by 2 we get
    \begin{align*}
        2k &= A + \left( 2\ell+1 \right)\frac{q+1}{\delta}.
    \end{align*}
    Since $\delta$ is even, we have that $\frac{2\ell+1}{\delta}$ will never be an integer, so such a $k$ will not satisfy $2k\equiv A \pmod{q+1}$.

    \item There is a unique solution, $k \equiv \frac{A}{2} \pmod{\frac{q+1}{\delta}}$. This will be some integer of the form
\begin{align*}
    k = \frac{A}{2} + \ell \frac{q+1}{\delta}.
\end{align*}
Multiplying by $2$, we have that
\begin{align*}
    2k = A + 2\ell \frac{q+1}{\delta}.
\end{align*}
Thus we have to break into cases based on whether $\delta=2$ or $\delta\ne 2$.
\setcounter{enumi}{4}
\item We see that when $k = \frac{A}{2} + \frac{q+1}{2\delta}$, that the associated element in the elliptic image is
\begin{align*}
    \zeta_{q^2-1}^{n \left( A + \left( \frac{A}{2} + \frac{q+1}{2\delta} \right)(q-1) \right)} + \zeta_{q^2-1}^{n \left( Aq - \left( \frac{A}{2} + \frac{q+1}{2\delta} \right)(q-1) \right)} &= \zeta_{q^2-1}^{n A \frac{q+1}{2}} \left( \zeta_{q^2-1}^{\frac{n}{\delta} \frac{q^2-1}{2}} + \zeta_{q^2-1}^{-\frac{n}{\delta} \frac{q^2-1}{2}}  \right) \\
    &= a^{n/2} \left( (-1)^{n/\delta} + (-1)^{-n/\delta} \right).
\end{align*}
Since $\frac{q+1}{\delta}$ is even, we have that $n/\delta$ is odd, so the above reduces to $-2a^{n/2}$.
\end{enumerate}
\end{proof}

\begin{corollary} The size of the elliptic set is:
\begin{align*}
    |D_n(\mathcal{E}_q(a),a)| &= \floor{\frac{q+1}{2\delta}} + \begin{cases}
    1 & A\cdot \frac{q+1}{\delta} \text{ odd}; \\
    1 & A \text{ even, and } \delta\ne 2; \\
    0 & \text{otherwise}.
    \end{cases}
\end{align*}
\end{corollary}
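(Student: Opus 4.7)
The plan is to exploit the explicit characterization of $D_n(\mathcal{E}_q(a),a)$ from the preceding \autoref{prop:elliptic-image} and count distinct elements case by case, analogously to the hyperbolic corollary.

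The conceptual core is an involution argument. The parametrization
\[
k \longmapsto \zeta_{q^2-1}^{nA}\zeta_{\frac{q+1}{\delta}}^{\mu k} + \zeta_{q^2-1}^{nAq}\zeta_{\frac{q+1}{\delta}}^{-\mu k},
\]
viewed as a function on $\Z/\tfrac{q+1}{\delta}\Z$, is invariant under the involution $k \mapsto A - k$, which simply swaps the two summands. I would first argue that this is the only collision among the indexed elements by applying \autoref{prop:dickson-poly-is-2-iff-omegan-is-1} and reducing the resulting exponent equalities modulo $q^2-1$. The fixed points of this involution are precisely the solutions of $2k \equiv A \pmod{\frac{q+1}{\delta}}$, of which there are exactly $\gcd(2, \frac{q+1}{\delta})$ when that gcd divides $A$ and none otherwise; that is, one fixed point when $\frac{q+1}{\delta}$ is odd, two when $\frac{q+1}{\delta}$ and $A$ are both even, and none when $\frac{q+1}{\delta}$ is even and $A$ is odd.

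Next I would count. In each case of \autoref{prop:elliptic-image}, the indexed portion runs over $k \in \{1,\ldots,\frac{q+1}{\delta}\}$ with all fixed points removed, so the surviving indices split into free orbits of size two and contribute (valid indices)$/2$ distinct elements. To these I would add the separately listed extras $\pm 2 a^{n/2}$. That these extras are genuinely new follows from \autoref{lem:CGM} together with \autoref{prop:omega-properties-for-elliptic-elts}: for $y \in \mathcal{E}_q(a)$, the equality $D_n(y,a) = \pm 2 a^{n/2}$ forces the associated $k$ to satisfy $2k \equiv A \pmod{\frac{q+1}{\delta}}$, and such $k$ have already been excluded from the indexed portion.

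Finally I would match the count to the stated formula using
\[
\floor{\frac{q+1}{2\delta}} = \begin{cases} \frac{q+1}{2\delta} & \frac{q+1}{\delta}\ \text{even}, \\ \frac{(q+1)/\delta - 1}{2} & \frac{q+1}{\delta}\ \text{odd}, \end{cases}
\]
and walking through the seven cases. For example, Case 3 ($A$ odd, $\frac{q+1}{\delta}$ odd) has one excluded fixed point and one extra, giving $\frac{(q+1)/\delta - 1}{2} + 1 = \floor{\frac{q+1}{2\delta}} + 1$, matching the "$A \cdot \frac{q+1}{\delta}$ odd" branch; Case 7 ($A$ even, $\frac{q+1}{\delta}$ even, $\delta \ne 2$) has two excluded fixed points and two extras, giving $\frac{q+1}{2\delta} - 1 + 2 = \floor{\frac{q+1}{2\delta}} + 1$, matching the "$A$ even and $\delta \ne 2$" branch. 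The degenerate Case 1 ($\delta = q+1$, image a singleton) is the only mildly delicate one: here $\floor{\frac{q+1}{2\delta}} = 0$ and the necessary $+1$ is supplied by the first branch when $A$ is odd and by the second branch otherwise (since $\delta = q+1 > 2$ rules out $\delta = 2$). The hard part of this proof is not conceptual but purely clerical; the involution idea handles everything uniformly, and the seven-case bookkeeping is the only real work.
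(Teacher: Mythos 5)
Your proposal is correct and is essentially the argument the paper intends (the corollary is stated in the paper without proof, as a direct count from \autoref{prop:elliptic-image}): the involution $k\mapsto A-k$ swapping the two summands, the identification of its fixed points with solutions of $2k\equiv A\pmod{\frac{q+1}{\delta}}$, and the verification that the extras $\pm 2a^{n/2}$ are new all match the machinery already set up in \autoref{prop:dickson-poly-is-2-iff-omegan-is-1} and \autoref{lem:CGM}. The seven-case bookkeeping, including the degenerate $\delta=q+1$ case, checks out against the stated formula.
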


\begin{lemma}\label{lem:elliptic-sum} The elliptic sum is
\begin{align*}
        S_q^{\mathcal{E}_q(a)}(D_n(x,a)) &= \begin{cases}
        2\legendre{a}{q}^{n/\delta} a^{n/2} & \delta = q+1; \\
    % 1
    0
    & A\text{ odd, } \frac{q+1}{\delta}\text{ even}; \\
    % 2
   (-1)^{n/\delta}a^{n/2}
    & A\text{ odd, } \frac{q+1}{\delta}\text{ odd}; \\
    % 3
    - a^{n/2}
    & A\text{ even, } \frac{q+1}{\delta}\text{ odd, } \delta=2; \\
    % 4
    a^{n/2}
    & A\text{ even, } \frac{q+1}{\delta}\text{ odd, } \delta\ne 2; \\
    % 5 
    -2 a^{n/2}
 & A\text{ even, } \frac{q+1}{\delta}\text{ even, } \delta=2; \\
    % 6
    0
 & A\text{ even, } \frac{q+1}{\delta}\text{ even, } \delta\ne 2. \\
    \end{cases}
\end{align*}
\end{lemma}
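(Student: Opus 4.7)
The plan is to directly mirror the proof of \autoref{lem:labelname}: I start from the case-by-case description of the elliptic image in \autoref{prop:elliptic-image} and sum each case termwise. The essential algebraic ingredient is that, since $\delta = \gcd(n, q+1)$, one has $\gcd(\mu, (q+1)/\delta) = 1$, so $\eta := \zeta_{(q+1)/\delta}^{\mu}$ is itself a primitive $(q+1)/\delta$-th root of unity.

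Writing $\alpha := \zeta_{q^2-1}^{nA}$ and $\beta := \zeta_{q^2-1}^{nAq}$, each generic element of the elliptic image has the form $f(k) = \alpha \eta^k + \beta \eta^{-k}$. Setting $N := (q+1)/\delta$, the unrestricted sum $\sum_{k=1}^{N} f(k) = \alpha \sum_k \eta^k + \beta \sum_k \eta^{-k}$ vanishes whenever $N > 1$ by the standard identity for primitive roots of unity. The listing in \autoref{prop:elliptic-image} records each distinct element exactly twice via the involution $k \mapsto A-k \pmod N$, except at the fixed points $2k \equiv A \pmod N$, which are precisely the indices the proposition excludes. At any such fixed point $k_0$, the element $f(k_0) = 2\alpha \eta^{k_0}$ evaluates to $\pm 2a^{n/2}$ by a direct calculation using $\zeta_{q^2-1}^{q+1} = \zeta_{q-1}$ and $\zeta_{q-1}^A = a$; this is exactly the content of the explicit evaluations already carried out inside the proof of \autoref{prop:elliptic-image}, and the sign is controlled by $(-1)^{n/\delta}$.

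Assembling the pieces, the sum over the distinct generic elements equals $\tfrac{1}{2}(0 - \Sigma_{\mathrm{fix}})$, where $\Sigma_{\mathrm{fix}}$ is the sum of the fixed-point values excluded from the listing. To this I add back the explicit leftover elements $\{\pm 2a^{n/2}\}$ or $\{2(-1)^{n/\delta} a^{n/2}\}$ appended in the proposition. The degenerate case $\delta = q+1$ requires no summation at all: the image is a singleton and the sum equals $2a^{n/\delta} = 2\legendre{a}{q}^{n/\delta} a^{n/2}$ by the identity verified in the proof of \autoref{prop:elliptic-image}.

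The principal obstacle is parity bookkeeping. When $(q+1)/\delta$ is even, $\mu = n/\delta$ is forced to be odd (since $\gcd(\mu, N) = 1$ with $N$ even), so $(-1)^{n/\delta} = -1$; this is what produces the $-2a^{n/2}$ in the $\delta = 2$ subcase and the full cancellation to $0$ in the $\delta \ne 2$ subcase. The remaining subcases are handled analogously, each reducing to a short sum of at most three explicit terms, and together they reproduce the piecewise formula in the statement.
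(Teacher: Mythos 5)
Your proposal is correct and follows essentially the same route as the paper: sum over the case-by-case description of the elliptic image from \autoref{prop:elliptic-image} and then use the parity observation that $\frac{q+1}{\delta}$ even forces $n/\delta$ odd to simplify the last two cases. The only difference is that you make explicit the mechanism the paper leaves implicit (the vanishing of the full sum of powers of the primitive $\frac{q+1}{\delta}$-th root of unity $\zeta_{\frac{q+1}{\delta}}^{\mu}$, together with the two-to-one counting under $k\mapsto A-k$), which is a welcome elaboration rather than a new idea.
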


\begin{proof} We may sum over the elliptic image to get
\begin{align*}
    S_q^{\mathcal{E}_q(a)}(D_n(x,a)) &= \begin{cases}
    2\legendre{a}{q}^{n/\delta} a^{n/2} & \delta = q+1; \\
    % 1
    0
    & A\text{ odd, } \frac{q+1}{\delta}\text{ even}; \\
    % 2
   (-1)^{n/\delta}a^{n/2}
    & A\text{ odd, } \frac{q+1}{\delta}\text{ odd}; \\
    % 3
    - a^{n/2}
    & A\text{ even, } \frac{q+1}{\delta}\text{ odd, } \delta=2; \\
    % 4
    a^{n/2}
    & A\text{ even, } \frac{q+1}{\delta}\text{ odd, } \delta\ne 2; \\
    % 5 
    - a^{n/2} + (-1)^{n/\delta}a^{n/2}
 & A\text{ even, } \frac{q+1}{\delta}\text{ even, } \delta=2; \\
    % 6
    (-1)^{n/\delta}a^{n/2} + a^{n/2}
 & A\text{ even, } \frac{q+1}{\delta}\text{ even, } \delta\ne 2. \\
    \end{cases}
\end{align*}
In the latter two cases, since $\frac{q+1}{\delta}$ is even and coprime to $n/\delta$, we have that $n/\delta$ is odd, which gives the statement of the lemma.
\end{proof}

In order to characterize potential overlaps in the images of the hyperbolic, elliptic, and parabolic sets, it will be easier to break into the case of $a$ a residue and non-residue.

\subsection{Overlaps in the non-residue case}
Via \autoref{prop:possible-hyp-ellip-overlap}, we know that any overlap in the hyperbolic and elliptic images must be a subset of $\left\{ \pm 2a^{n/2} \right\}$, and similarly we know the parabolic image to be $\left\{ 2a^{n/2} \right\}$ when $a$ is a residue, and empty otherwise. So it suffices to determine when either of $\pm 2a^{n/2}$ lie in the hyperbolic and elliptic image.

We remark however that these images will occur in the hyperbolic image precisely when the second set in \autoref{eqn:hyperbolic-image} is nonempty, therefore we can understand these images via the work already done in \autoref{prop:hyperbolic-image}. Similarly for the elliptic case we have solved for when $\pm 2a^{n/2}$ lies in the elliptic image in \autoref{prop:elliptic-image}. We can summarize these findings as follows.

\begin{proposition}\label{prop:nonresidue-EH-overlap} If $a$ is a non-residue, then the elliptic and hyperbolic overlap is the following:
\begin{align*}
    D_n(\mathcal{H}_q(a),a) \cap D_n(\mathcal{E}_q(a),a) &= \begin{cases} \left\{ 2a^{n/2} \right\} & 2^r \mid n; \\
    \emptyset & \text{otherwise.} \end{cases}
\end{align*}
\end{proposition}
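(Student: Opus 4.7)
The plan is to reduce the problem to a finite check on $\pm 2a^{n/2}$ using \autoref{prop:possible-hyp-ellip-overlap}, then read off exactly when each of these two elements lies in the hyperbolic and in the elliptic image via the explicit classifications of \autoref{prop:hyperbolic-image} and \autoref{prop:elliptic-image}, and finally combine the resulting parity conditions using \autoref{prop:r-d-delta-relationship} and \autoref{prop:parity-constraints}. The nonresidue hypothesis enters at the outset through the observation that $a = \zeta_{q-1}^A$ is a nonresidue if and only if $A$ is odd, so only the ``$A$ odd'' rows of the two classification propositions are relevant.

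In the $A$ odd rows of \autoref{prop:hyperbolic-image}, the elements $\pm 2a^{n/2}$ appear in $D_n(\mathcal{H}_q(a),a)$ exactly when $\tfrac{q-1}{d}$ is odd (the cases $d=q-1$ and ``$A$ odd, $\tfrac{q-1}{d}$ odd'' both produce the single element $2(-1)^{n/d}a^{n/2}$; the ``$A$ odd, $\tfrac{q-1}{d}$ even'' case produces neither). Thus $2a^{n/2} \in D_n(\mathcal{H}_q(a),a)$ iff $\tfrac{q-1}{d}$ is odd and $\tfrac{n}{d}$ is even, while $-2a^{n/2} \in D_n(\mathcal{H}_q(a),a)$ iff $\tfrac{q-1}{d}$ is odd and $\tfrac{n}{d}$ is odd. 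An entirely parallel reading of \autoref{prop:elliptic-image} shows the analogous statement for the elliptic image with $d, \tfrac{q-1}{d}, \tfrac{n}{d}$ replaced by $\delta, \tfrac{q+1}{\delta}, \tfrac{n}{\delta}$.

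For $2a^{n/2}$ to lie in the intersection I therefore need all four conditions
\[
\tfrac{q-1}{d}\ \text{odd},\quad \tfrac{q+1}{\delta}\ \text{odd},\quad \tfrac{n}{d}\ \text{even},\quad \tfrac{n}{\delta}\ \text{even}.
\]
By part (4) of \autoref{prop:r-d-delta-relationship} the last two conditions are jointly equivalent to $2^r \mid n$, and by part (3) the first two are equivalent to $2^{r-1}\mid n$; since the former implies the latter, the combined criterion is exactly $2^r \mid n$. For $-2a^{n/2}$ the criterion becomes all four quantities odd, which is ruled out by \autoref{prop:parity-constraints}(2): if both $\tfrac{q-1}{d}$ and $\tfrac{q+1}{\delta}$ are odd then $\tfrac{n}{d}$ and $\tfrac{n}{\delta}$ cannot both be odd. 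Combining these two observations with \autoref{prop:possible-hyp-ellip-overlap} yields the stated dichotomy.

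The only mild obstacle is making sure the bookkeeping is airtight across the many case rows of the two classification propositions; in particular one should verify that the boundary case $d = q-1$ (respectively $\delta = q+1$), where $\tfrac{q-1}{d}=1$ (respectively $\tfrac{q+1}{\delta}=1$) is trivially odd, is consistent with the general formula $2(-1)^{n/d}a^{n/2}$ (respectively $2(-1)^{n/\delta}a^{n/2}$) via the identification $\chi_q(a)^{n/d}=(-1)^{n/d}$ when $a$ is a nonresidue. Once that is confirmed, the rest is pure parity arithmetic.
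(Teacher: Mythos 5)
Your proposal is correct and follows essentially the same route as the paper: reduce to the candidates $\pm 2a^{n/2}$ via \autoref{prop:possible-hyp-ellip-overlap}, read off from the ``$A$ odd'' rows of \autoref{prop:hyperbolic-image} and \autoref{prop:elliptic-image} that membership requires $\tfrac{q-1}{d}$ and $\tfrac{q+1}{\delta}$ odd with the signs governed by the parities of $\tfrac{n}{d}$ and $\tfrac{n}{\delta}$, and conclude with \autoref{prop:parity-constraints} and \autoref{prop:r-d-delta-relationship}. Your treatment is in fact slightly more explicit than the paper's (separating the $+$ and $-$ cases and checking the $d=q-1$, $\delta=q+1$ boundary rows), but the underlying argument is identical.
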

\begin{proof} Via \autoref{prop:hyperbolic-image} when $A$ is odd, we will have $2(-1)^{n/d}a^{n/2}$ in the hyperbolic image if $\frac{q-1}{d}$ is odd (including when $\frac{q-1}{d} = 1$). Similarly via \autoref{prop:elliptic-image}, we will have $2(-1)^{n/\delta}a^{n/2}$ in the elliptic image when $\frac{q+1}{\delta}$ is odd (including when $\frac{q+1}{\delta} = 1$). Therefore in order to have any overlap we must have that the parities of $\frac{n}{d}$ and $\frac{n}{\delta}$ coincide.  Since both $\frac{q-1}{d}$ and $\frac{q+1}{\delta}$ are odd, via \autoref{prop:parity-constraints} in order for the parities of $\frac{n}{d}$ and $\frac{n}{\delta}$ to agree, they must both be even. This condition is equivalent to $2^r\mid n$ by \autoref{prop:r-d-delta-relationship}.
\end{proof}

In the case where $A$ is odd, we can decompose the hyperbolic sum by making the $d=q-1$ case a separate condition as follows:
\allowdisplaybreaks
\begin{align*}
    S_q^{\mathcal{H}_q(a)}(D_n(x,a)) &= \begin{cases}
    2(-1)^{n/d}a^{n/2} & d = q-1; \\
    0 & \frac{q-1}{d} \text{ even}; \\
   (-1)^{n/d} a^{n/2} & \frac{q-1}{d} \text{ odd,}\ne 1, \\
   \end{cases} \\
   &= \begin{cases} (-1)^{n/d}a^{n/d} & \frac{q-1}{d}\text{ odd}; \\ 0 & \frac{q-1}{d}\text{ even}, \end{cases} + \begin{cases} (-1)^{n/d}a^{n/2} & d=q-1; \\ 0 & \text{else.} \end{cases}
\end{align*}
A similar argument shows that
\begin{align*}
    S_q^{\mathcal{E}_q(a)}(D_n(x,a)) &= \begin{cases} (-1)^{n/\delta}a^{n/2} & \frac{q+1}{\delta}\text{ odd}; \\
    0 & \frac{q+1}{\delta}\text{ even}, \end{cases} + \begin{cases} 
    (-1)^{n/\delta} a^{n/2} & \delta=q+1; \\
    0 & \text{else}. \end{cases}
\end{align*}

\begin{lemma}\label{lem:A-odd-sum} Let $A$ be odd. Then the sum is given as
\begin{align*}
    S_q(D_n(x,a)) &= \begin{cases}
    0 & 2^{r-1}\mid n; \\
    -(-1)^{\frac{n}{d}+\frac{n}{\delta}} a^{n/2} & 2^{r-1}\nmid n, \end{cases} + \begin{cases}
    (-1)^{n/d}a^{n/2} & d=q-1; \\
    0 & \text{else}, \end{cases} + \begin{cases}
    (-1)^{n/\delta}a^{n/2} & \delta=q+1; \\
    0 & \text{else}. \end{cases}
\end{align*}
\end{lemma}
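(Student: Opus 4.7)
The plan is to combine the two-term rewrites of $S_q^{\Hyp_q(a)}$ and $S_q^{\Ell_q(a)}$ displayed immediately above the lemma with the overlap formula of \autoref{prop:nonresidue-EH-overlap}, and then simplify using the parity identities in \autoref{prop:r-d-delta-relationship} and \autoref{prop:parity-constraints}.

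First I would observe that the hypothesis $A$ odd is equivalent to $\legendre{a}{q} = -1$, since $\legendre{a}{q} = a^{(q-1)/2} = \zeta_{q-1}^{A(q-1)/2}$ is $-1$ exactly when $A$ is odd. In particular $a$ is a non-residue, so $\Para_q(a) = \emptyset$ and the inclusion-exclusion opening this section collapses to
\begin{equation*}
    S_q(D_n(x,a)) = S_q^{\Hyp_q(a)}(D_n(x,a)) + S_q^{\Ell_q(a)}(D_n(x,a)) - S_q^{\Hyp_q(a),\Ell_q(a)}(D_n(x,a)).
\end{equation*}
Substituting the two-term expressions for each single-set sum and the overlap $2a^{n/2}$ if $2^r\mid n$ and $0$ otherwise from \autoref{prop:nonresidue-EH-overlap}, the boundary pieces indexed by $d=q-1$ and $\delta=q+1$ already match the last two Iverson brackets in the claim. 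It therefore suffices to show that the bulk contribution
\begin{equation*}
    T := \begin{cases}(-1)^{n/d}a^{n/2} & \tfrac{q-1}{d}\text{ odd}\\ 0 & \text{else}\end{cases} + \begin{cases}(-1)^{n/\delta}a^{n/2} & \tfrac{q+1}{\delta}\text{ odd}\\ 0 & \text{else}\end{cases} - \begin{cases}2a^{n/2} & 2^r\mid n\\ 0 & \text{else}\end{cases}
\end{equation*}
equals $0$ when $2^{r-1}\mid n$ and $-(-1)^{n/d+n/\delta}a^{n/2}$ otherwise.

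I would finish by splitting on whether $2^{r-1}\mid n$. By \autoref{prop:r-d-delta-relationship}(3), this divisibility is equivalent to both $(q-1)/d$ and $(q+1)/\delta$ being odd, so both of the first two summands in $T$ are active. If additionally $2^r\mid n$, then \autoref{prop:r-d-delta-relationship}(4) gives $n/d$ and $n/\delta$ both even, so the two summands total $2a^{n/2}$ and are cancelled exactly by the overlap; if instead $2^{r-1}\mid\mid n$, the overlap vanishes and \autoref{prop:parity-constraints}(3) forces $n/d$ and $n/\delta$ to have opposite parity, so the two summands cancel. In the remaining regime $2^{r-1}\nmid n$, the overlap is zero, and \autoref{prop:parity-constraints}(1) together with \autoref{prop:r-d-delta-relationship}(3) implies that exactly one of $(q-1)/d$, $(q+1)/\delta$ is even; by coprimality $\gcd(n/d,(q-1)/d) = \gcd(n/\delta,(q+1)/\delta) = 1$, whichever fraction is even forces its own quotient ($n/d$ or $n/\delta$) to be odd. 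A short sign check in each of these two symmetric subcases identifies the surviving single term with $-(-1)^{n/d+n/\delta}a^{n/2}$, using that one odd exponent contributes the extra minus sign demanded by the formula.

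The main obstacle is purely combinatorial bookkeeping: confirming, in both subcases of $2^{r-1}\nmid n$, that the lone surviving contribution compactifies to $-(-1)^{n/d+n/\delta}a^{n/2}$ regardless of which of the hyperbolic or elliptic sides is active. This reduces to the identity $(-1)^{\mathrm{odd}} = -1$ applied to whichever of $n/d$, $n/\delta$ is forced odd by the coprimality constraint above, and should require no more than a single line of verification per subcase.
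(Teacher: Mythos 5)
Your proposal is correct and follows essentially the same route as the paper: decompose the sum as $S_q^{\Hyp_q(a)}+S_q^{\Ell_q(a)}-S_q^{\Hyp_q(a),\Ell_q(a)}$ using the two-term rewrites displayed before the lemma together with \autoref{prop:nonresidue-EH-overlap}, then case-split on $2^r\mid n$, $2^{r-1}\mid\mid n$, and $2^{r-1}\nmid n$ via \autoref{prop:r-d-delta-relationship} and \autoref{prop:parity-constraints}, with the same coprimality observation forcing the relevant quotient odd in the last case. The only cosmetic difference is that the paper phrases the final case split in terms of $\ell>h$ versus $h>\ell$ rather than directly in terms of which of $(q-1)/d$, $(q+1)/\delta$ is even, but the content is identical.
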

\begin{proof} We can combine the three conditions:
\begin{align*}
    \begin{cases} (-1)^{n/d}a^{n/d} & \frac{q-1}{d}\text{ odd}; \\ 
    0 & \frac{q-1}{d}\text{ even}, \end{cases} + \begin{cases} 
    (-1)^{n/\delta}a^{n/2} & \frac{q+1}{\delta}\text{ odd}; \\ 
    0 & \frac{q+1}{\delta}\text{ even}, \end{cases} - \begin{cases} 
    2a^{n/2} & \frac{n}{d},\ \frac{n}{\delta}\text{ even}; \\
    0 & \text{else.} \end{cases}
\end{align*}
Rewriting these conditions using $h$, $\ell$, and $r = h+\ell$, we have
\begin{align*}
    \begin{cases} (-1)^{n/d}a^{n/d} & 2^h\mid n; \\ 
    0 & 2^h\nmid n,
    \end{cases} + \begin{cases}
    (-1)^{n/\delta}a^{n/2} & 2^\ell \mid n; \\ 
    0 & 2^\ell \nmid n, 
    \end{cases} - \begin{cases}
    2a^{n/2} & 2^{h+\ell}\mid n; \\
    0 & 2^{h+\ell}\nmid n. \end{cases}
\end{align*}
Combining conditions, we can see that this simplifies to
\begin{align*}
    \begin{cases}
    0 & 2^r\mid n; \\
    (-1)^{n/d}a^{n/2} + (-1)^{n/\delta}a^{n/2} & 2^{r-1}\mid\mid n; \\
    (-1)^{n/d}a^{n/2}& 2^{r-1}\nmid n,\ \ell > h; \\
    (-1)^{n/\delta}a^{n/2} & 2^{r-1}\nmid n,\ h>\ell.
    \end{cases}
\end{align*}
Via \autoref{prop:parity-constraints}, when $2^{r-1}\mid\mid n$, we have that $\frac{n}{d}$ and $\frac{n}{\delta}$ have opposite parities, so we can merge the $2^{r-1}\mid\mid n$ and $2^r\mid n$ conditions.

When $\ell > h$, we have that $\ell= r-1$, so $2^{r-1}\nmid n$ is equivalent to $\frac{q+1}{\delta}$ being even, which implies $\frac{n}{\delta}$ is odd. Similarly in the last case, $\frac{n}{d}$ is odd, so we can merge the last two conditions. This gives
\begin{align*}
    \begin{cases}
    0 & 2^{r-1}\mid n; \\
    -(-1)^{\frac{n}{d}+\frac{n}{\delta}} a^{n/2} & 2^{r-1}\nmid n. \end{cases}
\end{align*}
Adding back the $d=q-1$ and $\delta=q+1$ conditions, we obtain the desired statement.
\end{proof}

\subsection{Overlaps: the residue case}
In the residue setting, the parabolic image is precisely $\{2a^{n/2}\}$. Thus we need to study when $2a^{n/2}$ can lie in the hyperbolic and elliptic images in order to understand when they admit overlap with the parabolic image.

We can begin with the hyperbolic-elliptic overlap. We note that in almost all the cases in which they have overlap, the number at which they overlap is $2a^{n/2}$, which is parabolic. In particular, when this occurs, we will have 
\[
S_q^{\mathcal{H}_q(a),\mathcal{E}_q(a)}(D_n(x,a)) - S_q^{\mathcal{H}_q(a),\mathcal{E}_q(a),\mathcal{P}_q(a)}(D_n(x,a)) = 0.
\]
We note that this difference will only ever be nonzero when $-2a^{n/2}$ lies in both the hyperbolic and elliptic images. However this cannot occur.
\begin{proposition}\label{prop:HE-HEP-difference} Let $a$ be a residue. Then
\begin{align*}
    S_q^{\mathcal{H}_q(a),\mathcal{E}_q(a)}(D_n(x,a)) - S_q^{\mathcal{H}_q(a),\mathcal{E}_q(a),\mathcal{P}_q(a)}(D_n(x,a)) = 0.
\end{align*}
\end{proposition}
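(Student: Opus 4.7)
The plan is to use Proposition \ref{prop:possible-hyp-ellip-overlap} to restrict the hyperbolic-elliptic overlap to a subset of $\{\pm 2a^{n/2}\}$, and then determine exactly when each of these values appears in both images by direct inspection of the classifications in Propositions \ref{prop:hyperbolic-image} and \ref{prop:elliptic-image}. Because $a$ is a residue, $A$ is even and the parabolic image equals $\{2a^{n/2}\}$. Thus it suffices to analyze the two candidate values separately.

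For the value $2a^{n/2}$: if $2a^{n/2}$ belongs to $D_n(\mathcal{H}_q(a),a)\cap D_n(\mathcal{E}_q(a),a)$, then since $2a^{n/2}\in D_n(\mathcal{P}_q(a),a)$ it automatically lies in the triple intersection. Consequently its contribution to $S_q^{\mathcal{H}_q(a),\mathcal{E}_q(a)}(D_n(x,a))$ is exactly matched by its contribution to $S_q^{\mathcal{H}_q(a),\mathcal{E}_q(a),\mathcal{P}_q(a)}(D_n(x,a))$, and these terms cancel in the stated difference. The same reasoning shows that whether or not $2a^{n/2}$ is actually in both images is immaterial.

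For the value $-2a^{n/2}$: scanning the $A$-even cases of Proposition \ref{prop:hyperbolic-image}, the element $-2a^{n/2}$ appears in $D_n(\mathcal{H}_q(a),a)$ only in the two subcases where $\tfrac{q-1}{d}$ is even (the cases with $d=q-1$ or $\tfrac{q-1}{d}$ odd produce only $2a^{n/2}$, not $-2a^{n/2}$). Analogously, Proposition \ref{prop:elliptic-image} shows that $-2a^{n/2}$ appears in $D_n(\mathcal{E}_q(a),a)$ only when $\tfrac{q+1}{\delta}$ is even. By Proposition \ref{prop:parity-constraints}(1), $\tfrac{q-1}{d}$ and $\tfrac{q+1}{\delta}$ cannot both be even, so these conditions are mutually exclusive. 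Hence $-2a^{n/2}$ never lies in both the hyperbolic and elliptic images, contributing $0$ to each side of the difference.

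Combining the two cases, every element of $D_n(\mathcal{H}_q(a),a)\cap D_n(\mathcal{E}_q(a),a)$ also lies in $D_n(\mathcal{P}_q(a),a)$, whence $S_q^{\mathcal{H}_q(a),\mathcal{E}_q(a)}$ and $S_q^{\mathcal{H}_q(a),\mathcal{E}_q(a),\mathcal{P}_q(a)}$ agree, giving the desired equality. The main obstacle is bookkeeping: one must verify carefully, across all of the case splits in Propositions \ref{prop:hyperbolic-image} and \ref{prop:elliptic-image}, that the parity restriction in Proposition \ref{prop:parity-constraints}(1) really does exclude every configuration in which $-2a^{n/2}$ could simultaneously appear in both images.
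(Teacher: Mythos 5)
Your proposal is correct and follows essentially the same route as the paper: restrict the hyperbolic--elliptic overlap to $\{\pm 2a^{n/2}\}$, rule out $-2a^{n/2}$ lying in both images via the parity constraint that $\tfrac{q-1}{d}$ and $\tfrac{q+1}{\delta}$ cannot both be even, and conclude that any overlap occurs at $2a^{n/2}$, which is parabolic. The only difference is that you spell out the case-by-case inspection of the hyperbolic and elliptic image classifications a bit more explicitly than the paper does.
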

\begin{proof} We see that in the residue case, $\legendre{a}{q}=1$. So $-2a^{n/2}$ can only lie in the hyperbolic image when $\frac{q-1}{d}$ even, while $-2a^{n/2}$ can only lie in the elliptic image when $\frac{q+1}{\delta}$ is even. However these both can't simultaneously occur by \autoref{prop:parity-constraints}. Thus any hyperbolic-elliptic overlap occurs at $2a^{n/2}$ which therefore is parabolic as well.
\end{proof}

We can easily characterize the hyperbolic-parabolic and elliptic-parabolic overlaps by observing when $2a^{n/2}$ lies in the hyperbolic and elliptic images.

\begin{proposition}\label{prop:HP-overlap} Let $a$ be a residue. Then
\begin{align*}
    S_q^{\mathcal{H}_q(a),\mathcal{P}_q(a)}(D_n(x,a)) &=%
    \begin{cases}
    2a^{n/2} & d\ne 2; \\
    0 & \text{otherwise}.
    \end{cases}
\end{align*}
\end{proposition}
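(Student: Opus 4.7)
The plan is to reduce the statement to determining when $2a^{n/2}$ lies in the hyperbolic image, and then to extract this information directly from the case analysis in \autoref{prop:hyperbolic-image}. First, since $a$ is a residue we have $\legendre{a}{q} = 1$, which forces the integer $A$ (defined by $a = \zeta_{q-1}^A$) to be even. By \autoref{rmk:preliminary-observations}(3), the parabolic image under $D_n$ is the singleton $\{2a^{n/2}\}$. Therefore the overlap $D_n(\mathcal{H}_q(a),a) \cap D_n(\mathcal{P}_q(a),a)$ is either $\{2a^{n/2}\}$ or $\emptyset$, and the sum $S_q^{\mathcal{H}_q(a),\mathcal{P}_q(a)}(D_n(x,a))$ equals $2a^{n/2}$ or $0$ accordingly. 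So the task reduces to deciding whether $2a^{n/2} \in D_n(\mathcal{H}_q(a),a)$.

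Next I would consult \autoref{prop:hyperbolic-image}, restricted to the rows with $A$ even. When $d = q-1$ the image is the singleton $\{2\legendre{a}{q}^{n/d} a^{n/2}\} = \{2a^{n/2}\}$, and $d = q-1 \ne 2$ since $q \ne 3$ by \autoref{ass:ass}. When $d \ne q-1$ but $d \ne 2$, both of the relevant $A$-even subcases in \autoref{prop:hyperbolic-image} explicitly adjoin $2a^{n/2}$ to the parametrized portion. When $d = 2$, the two $A$-even subcases adjoin only $-2a^{n/2}$ or nothing at all, so $2a^{n/2}$ is absent from the explicitly listed portion of the image.

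The one subtle step is verifying that the parametrized portion of the hyperbolic image does not accidentally produce $2a^{n/2}$ in the $d = 2$ cases. By the corollary following \autoref{prop:dickson-poly-is-2-iff-omegan-is-1}, a hyperbolic $x$ with $\omega(x,a) = \zeta_{q-1}^c$ satisfies $D_n(x,a) = 2a^{n/2}$ exactly when $\omega(x,a)^n = a^{n/2}$; writing this as $nc \equiv An/2 \pmod{q-1}$ and dividing through by $d$ (using $\gcd(m,(q-1)/d) = 1$ together with $A$ even) yields $c \equiv A/2 \pmod{(q-1)/d}$. This is precisely the congruence excluded from the parametrized set in the $d = 2$ rows of \autoref{prop:hyperbolic-image}, so $2a^{n/2}$ indeed does not appear there. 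Combining the pieces, $2a^{n/2}$ lies in the hyperbolic image if and only if $d \ne 2$, giving the stated formula. The main (mild) obstacle is simply the bookkeeping across the case split; once the congruence analysis above is in hand, the conclusion reads off from \autoref{prop:hyperbolic-image}.
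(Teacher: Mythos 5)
Your proposal is correct and follows exactly the route the paper intends: the paper states this proposition without proof, remarking only that one reads off when $2a^{n/2}$ lies in the hyperbolic image from \autoref{prop:hyperbolic-image}, which is precisely what you do. Your extra verification that the parametrized portion of the hyperbolic image cannot accidentally produce $2a^{n/2}$ (via the congruence $c\equiv A/2 \pmod{\frac{q-1}{d}}$, using $\gcd(m,\frac{q-1}{d})=1$) is a detail the paper leaves implicit in the construction of \autoref{prop:hyperbolic-image}, and it is handled correctly.
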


\begin{proposition}\label{prop:EP-overlap} Let $a$ be a residue. Then
\begin{align*}
    S_q^{\mathcal{E}_q(a),\mathcal{P}_q(a)}(D_n(x,a)) &=%
    \begin{cases}
    2a^{n/2} & \delta\ne 2; \\
    0 & \text{otherwise}.
    \end{cases}
\end{align*}
\end{proposition}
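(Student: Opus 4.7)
The plan is to reduce the claim to determining when $2a^{n/2}$ lies in the elliptic image $D_n(\mathcal{E}_q(a),a)$, since when $a$ is a residue the parabolic image equals $\{2a^{n/2}\}$ by \autoref{rmk:preliminary-observations}.

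First I would observe that $a$ being a residue forces $A$ to be even: the equality $\legendre{a}{q}=a^{(q-1)/2}=\zeta_{q-1}^{A(q-1)/2}=1$ demands $2\mid A$. This restricts attention to the five cases of \autoref{prop:elliptic-image} with $A$ even. In the three such cases satisfying $\delta\neq 2$---namely $\delta=q+1$ (note $q+1\geq 4$), $\frac{q+1}{\delta}$ odd with $\delta\neq 2$, and $\frac{q+1}{\delta}$ even with $\delta\neq 2$---the element $2a^{n/2}$ appears explicitly as part of the image, yielding overlap $\{2a^{n/2}\}$ with the parabolic image. It remains to handle the two cases where $\delta=2$.

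When $\delta=2$ I would show that $2a^{n/2}$ is absent from the elliptic image by directly solving $\omega(y,a)^n = a^{n/2}$, which via the Binet formula characterizes the preimages of $2a^{n/2}$ under $D_n$. Writing $\omega(y,a) = \zeta_{q^2-1}^{A+k(q-1)}$ and $a^{n/2} = \zeta_{q^2-1}^{nA(q+1)/2}$ (the latter valid because $A$ is even), the equation collapses after dividing by $q-1$ to the congruence $nk \equiv nA/2 \pmod{q+1}$, i.e.\ $k \equiv A/2 \pmod{(q+1)/\delta}$. Parameterizing the $\delta$ solutions modulo $q+1$ as $k = A/2 + \ell(q+1)/\delta$ for $0\leq \ell<\delta$, the elliptic exclusion $2k\equiv A\pmod{q+1}$ reduces to $(\delta/2)\mid \ell$. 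When $\delta=2$ this holds for every $\ell\in\{0,1\}$, so every candidate $k$ yields a parabolic (excluded) representative and thus $2a^{n/2}\notin D_n(\mathcal{E}_q(a),a)$, giving empty overlap as desired.

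The main obstacle is this $\delta=2$ verification: \autoref{prop:elliptic-image} lists the exceptional singletons $\pm 2a^{n/2}$ but does not by itself rule out the possibility that the bulk enumerated set contains another representative equal to $2a^{n/2}$. The short congruence argument above closes this gap, and all other cases reduce to reading off \autoref{prop:elliptic-image} directly.
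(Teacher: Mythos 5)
Your proof is correct and takes essentially the same route as the paper, which states this proposition without a written proof and relies on reading off from \autoref{prop:elliptic-image} exactly when $2a^{n/2}$ lies in the elliptic image (noting that $a$ a residue forces $A$ even, so the parabolic image is $\{2a^{n/2}\}$). Your extra congruence check in the $\delta=2$ case---verifying that the bulk enumerated set in \autoref{prop:elliptic-image} cannot itself equal $2a^{n/2}$ for an allowed $k$---is a worthwhile point the paper leaves tacit (it also follows from the cited Lemma~7 of Chou--Gomez-Calderon--Mullen, since $D_n(y,a)=\pm 2a^{n/2}$ forces $2k\equiv A\pmod{\frac{q+1}{\delta}}$), but it does not change the argument.
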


Now we can characterize the entire sum by combining these sums and their overlaps. Before doing so, we can begin to cancel some of the sums with others. First we can combine the elliptic sum with the elliptic-parabolic overlap:
\allowdisplaybreaks
\begin{align*}
    S_q^{\mathcal{E}_q(a)}(D_n(x,a)) - S_q^{\mathcal{E}_q(a),\mathcal{P}_q(a)}(D_n(x,a)) &= \begin{cases}
        2a^{n/2} & \delta = q+1; \\
    - a^{n/2} & \frac{q+1}{\delta}\text{ odd, } \delta=2; \\
    a^{n/2} & \frac{q+1}{\delta}\text{ odd, } \delta\ne 2; \\
    -2 a^{n/2} & \frac{q+1}{\delta}\text{ even, } \delta=2; \\
    0 & \frac{q+1}{\delta}\text{ even, } \delta\ne 2, \\
    \end{cases} - \begin{cases}
    2a^{n/2} & \delta\ne 2; \\
    0 & \text{otherwise},
    \end{cases} \\
    &= \begin{cases}
    0 & \delta = q+1; \\
    -a^{n/2} & \frac{q+1}{\delta}\text{ odd, }\ne 1; \\
    -2a^{n/2} & \frac{q+1}{\delta}\text{ even},
    \end{cases} \\
    &=  \begin{cases}
    -a^{n/2} & \frac{q+1}{\delta}\text{ odd}; \\ 
    -2a^{n/2}& \frac{q+1}{\delta}\text{ even}, \end{cases} + \begin{cases}
    a^{n/2} & \delta=q+1; \\ 0 & \text{else}. \end{cases}
\end{align*}
Combining the hyperbolic and the hyperbolic-parabolic overlap we see
\allowdisplaybreaks
\begin{align*}
    S_q^{\mathcal{H}_q(a)}(D_n(x,a)) - S_q^{\mathcal{H}_q,(a)\mathcal{P}_q(a)}(D_n(x,a)) &=
    \begin{cases}
    2a^{n/2} & d = q-1; \\
    -a^{n/2} & \frac{q-1}{d} \text{ odd, } d=2; \\
    a^{n/2} & \frac{q-1}{d} \text{ odd, } d\ne 2; \\
    -2a^{n/2} & \frac{q-1}{d} \text{ even, } d=2;    \\
    0 & \frac{q-1}{d} \text{ even, } d\ne 2,
   \end{cases} - \begin{cases}
    2a^{n/2} & d\ne 2; \\
    0 & \text{otherwise},
    \end{cases} \\
    &= \begin{cases}
    0 & d=q-1; \\
    -a^{n/2} & \frac{q-1}{d}\text{ odd, }\ne 1; \\
    -2a^{n/2} & \frac{q-1}{d}\text{ even},
    \end{cases} \\
    &= \begin{cases}
    -a^{n/2} & \frac{q-1}{d}\text{ odd}; \\
    -2a^{n/2} & \frac{q-1}{d}\text{ even},
    \end{cases} + \begin{cases}
    a^{n/2} & d=q-1; \\
    0 & \text{else}. \end{cases}
\end{align*}

\begin{lemma}\label{lem:A-even-sum} Let $a$ be a residue. Then
\begin{align*}
    S_q(D_n(x,a)) &= \begin{cases}
    0 & 2^{r-1}\mid n; \\
    -a^{n/2} & 2^{r-1}\nmid n,
    \end{cases} + \begin{cases}
    a^{n/2} & \delta=q+1; \\
    0 & \text{else},
    \end{cases} + \begin{cases}
    a^{n/2} & d=q-1; \\
    0 & \text{else}. \end{cases}
\end{align*}
\end{lemma}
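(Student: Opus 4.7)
The plan is to apply the inclusion--exclusion formula stated at the beginning of Section~3 to decompose $S_q(D_n(x,a))$, and then invoke the reductions already performed immediately before the lemma. Explicitly, we start from
\begin{align*}
    S_q(D_n(x,a)) &= S_q^{\mathcal{H}_q(a)}(D_n(x,a)) + S_q^{\mathcal{E}_q(a)}(D_n(x,a)) + S_q^{\mathcal{P}_q(a)}(D_n(x,a)) \\
    &\quad - S_q^{\mathcal{H}_q(a),\mathcal{E}_q(a)}(D_n(x,a)) - S_q^{\mathcal{H}_q(a),\mathcal{P}_q(a)}(D_n(x,a)) \\
    &\quad - S_q^{\mathcal{E}_q(a),\mathcal{P}_q(a)}(D_n(x,a)) + S_q^{\mathcal{H}_q(a),\mathcal{E}_q(a),\mathcal{P}_q(a)}(D_n(x,a)).
\end{align*}

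By \autoref{prop:HE-HEP-difference}, in the residue case the hyperbolic--elliptic double-intersection term cancels exactly against the triple-intersection term. Grouping $S_q^{\mathcal{H}_q(a)}$ with $-S_q^{\mathcal{H}_q(a),\mathcal{P}_q(a)}$, and $S_q^{\mathcal{E}_q(a)}$ with $-S_q^{\mathcal{E}_q(a),\mathcal{P}_q(a)}$, then yields the two combined expressions computed in the paragraphs immediately preceding the lemma. Since $a$ is a residue, \autoref{prop:hyp-ell-para-sets-description} together with \autoref{rmk:preliminary-observations} gives $D_n(\mathcal{P}_q(a),a) = \{2a^{n/2}\}$, so $S_q^{\mathcal{P}_q(a)}(D_n(x,a)) = 2a^{n/2}$.

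Substituting, the ``boundary'' summands isolated in the two preceding displays (those indicated by $d=q-1$ and $\delta=q+1$) carry through unchanged, and give the last two cases of the target formula. The remaining terms combine into
\begin{align*}
    \begin{cases} -a^{n/2} & \tfrac{q-1}{d}\text{ odd}; \\ -2a^{n/2} & \tfrac{q-1}{d}\text{ even}, \end{cases} + \begin{cases} -a^{n/2} & \tfrac{q+1}{\delta}\text{ odd}; \\ -2a^{n/2} & \tfrac{q+1}{\delta}\text{ even}, \end{cases} + 2a^{n/2}.
\end{align*}
A short parity analysis finishes the argument: by \autoref{prop:parity-constraints}(1), the quantities $\tfrac{q-1}{d}$ and $\tfrac{q+1}{\delta}$ cannot both be even, so only two sub-cases remain. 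When both are odd, \autoref{prop:r-d-delta-relationship}(3) identifies the condition as $2^{r-1}\mid n$, and the display collapses to $-a^{n/2}-a^{n/2}+2a^{n/2}=0$. When exactly one is even (forcing $2^{r-1}\nmid n$), the display collapses to $-a^{n/2}-2a^{n/2}+2a^{n/2}=-a^{n/2}$ regardless of which side is the even one.

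The main obstacle is purely organizational: one must carefully line up the exceptional $d=q-1$ and $\delta=q+1$ contributions already pulled out of \autoref{lem:labelname} and \autoref{lem:elliptic-sum} with the generic pieces, and appeal to \autoref{prop:HE-HEP-difference} to kill the otherwise-delicate hyperbolic--elliptic overlap so that no double-counting occurs. Once this bookkeeping is in place, the remaining case analysis reduces to the parity check above.
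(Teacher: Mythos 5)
Your proposal is correct and follows essentially the same route as the paper: the inclusion--exclusion decomposition, cancellation of the hyperbolic--elliptic overlap against the triple intersection via \autoref{prop:HE-HEP-difference}, the two grouped displays preceding the lemma, and the final parity analysis via \autoref{prop:parity-constraints} and \autoref{prop:r-d-delta-relationship} all match the paper's argument. You are somewhat more explicit about the bookkeeping than the paper, but the content is identical.
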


\begin{proof} We first combine the conditions
\begin{align*}
    \begin{cases}
    -a^{n/2} & \frac{q+1}{\delta}\text{ odd}; \\ 
    -2a^{n/2}& \frac{q+1}{\delta}\text{ even},
    \end{cases} + \begin{cases}
    -a^{n/2} & \frac{q-1}{d}\text{ odd}; \\
    -2a^{n/2} & \frac{q-1}{d}\text{ even},
    \end{cases} = \begin{cases}
    -2a^{n/2} & 2^{r-1}\mid n; \\
    -3a^{n/2} & \text{else}.
    \end{cases}
\end{align*}
Adding back $2a^{n/2}$ from the parabolic sum and the $d=q-1$ and $\delta = q+1$ conditions yields the statement of the theorem.
\end{proof}

Combining \autoref{lem:A-odd-sum} and \autoref{lem:A-even-sum} yields the main theorem of the paper.

\subsection{Examples}
A priori, in order to totally characterize the sum of a family of Dickson polynomials, we must understand the quadratic character of $a$, whether $(q-1)$, $(q+1)$, or $2^{r-1}$ divide $n$, and the parities of $\frac{n}{d}$ and $\frac{n}{\delta}$. In the case of the Lucas polynomials $L_n(x) = D_n(x,a)$, many of these conditions coalesce --- for example the quadratic character of $-1$ is dependent upon the residue of our prime modulo four, which also determines possible parities of $\frac{n}{d}$ and $\frac{n}{\delta}$. In fact, modulo a fixed prime, knowledge of $d$ and $\delta$ alone determines the residue sum.
\begin{example} As in Figure~\ref{tab:table1}, consider when $p=7$. In this case there are a very limited number of possibilities for $d$ and for $\delta$. Since the values $d$ and $\delta$ completely determine $S_7(L_n)$, we provide the following table.
\begin{figure}[H]\label{fig:s7ln}
\caption{Possible values for $S_7(L_n)$.}
\par\noindent\rule{0.4\textwidth}{0.4pt} \\
\vspace{0.2em}
\begin{tabular}{c | c | c}
$d$ & $\delta$ & $S_7(L_n)$ \\
\midrule
 2 & 2 & 1\\
 2 & 4 & 1\\
 2 & 8 & -1
 \end{tabular} \quad
 \begin{tabular}{c | c | c}
$d$ & $\delta$ & $S_7(L_n)$ \\
\midrule
 6 & 2 & 2\\
 6 & 4 & 2\\
 6 & 8 & 0
 \end{tabular}
\centering
\end{figure}
To provide an example when $p \equiv 1 \pmod{4}$, we can write an analogous table for $p=29$, although as expected it is much larger. Possible even values for $d = \gcd(n,28)$ are $d\in \left\{2,4,14,28\right\}$, while $\delta = \gcd(n,60)$ must be even as well, and thus lies in $\delta \in \left\{2,4,6,10,12,20,30,60\right\}$. However we remark that as 28 and 60 are both divisible by 4, we have that $4\mid d$ if and only if $4\mid \delta$, which gives us a restriction on the possible pairs that can show up. This yields the following table.
\begin{figure}[H]\label{fig:s29ln}
\caption{Possible values for $S_{29}(L_n)$.}
\par\noindent\rule{0.81\textwidth}{0.4pt} \\
\vspace{0.2em}
\begin{tabular}{c | c | c}
$d$ & $\delta$ & $S_{29}(L_n)$ \\
\midrule
 2 & 2 & 1\\
 2 & 6 & 1 \\
 2 & 10 & 1 \\
 2 & 30 & 0 \\
 \end{tabular} \quad
\begin{tabular}{c | c | c}
$d$ & $\delta$ & $S_{29}(L_n)$ \\
\midrule
 4 & 4 & 0\\
 4 & 12 & 0 \\
 4 & 20 & 0\\
 \multicolumn{3}{c}{\textcolor{white}{.}}
 \end{tabular} \quad
 \begin{tabular}{c | c | c}
$d$ & $\delta$ & $S_{29}(L_n)$ \\
\midrule
 14 & 2 & 1\\
 14 & 6 & 1 \\
 14 & 10 & 1 \\
 14 & 30 & 0 \\
 \end{tabular} \quad
 \begin{tabular}{c | c | c}
$d$ & $\delta$ & $S_{29}(L_n)$ \\
\midrule
 28 & 4 & 1\\
 28 & 12 & 1 \\
 28 & 20 & 1 \\
  \multicolumn{3}{c}{\textcolor{white}{.}}
 \end{tabular} \quad
\centering
\end{figure}
\end{example}

\begin{example} As another example, let $T_n(x) = \cos(n\arccos(x))$ denote the $n$th Chebyshev polynomial. It is well known that these are related to the Dickson polynomials for $a=1$ via the equality
\[
D_n(2x,1) = 2T_n(x).
\]
In particular this implies that for odd characteristic we have
\begin{align*}
    S_q(T_n(x)) &= \frac{1}{2}S_q(D_n(x,1)).
\end{align*}
Invoking \autoref{thm:S-sum}, we may provide a characterization of this sum. As in the Lucas case, it admits an extremely constrained number of possible values. We may verify for all $n$ and $q$ that
\[
S_q(T_n(x)) \in \left\{\pm \frac{1}{2}, 0, 1\right\}.
\]
\end{example}

\section{Further Directions and Conclusion}

One natural direction to follow is to find other two step recurrences for which the above techniques can be employed. If one defines the polynomials $P_n(x)$ recursively by 
    \begin{align*}
        P_n(x)=Ax\cdot P_{n-1}(x)+B\cdot P_{n-2}(x),
    \end{align*}
given initial conditions
\begin{align*}
    P_0(x)&=C \text{ and } P_1(x) = \frac{AC}{2}x,
\end{align*}
where $A,B \in \Z$ and $C$ is an even integer, then $P_n(x)$ shares many of the same properties with the Dickson polynomials $D_n(x,a)$. In particular, $P_n(x)$ is of degree $n$ for each $n$, is odd for $n$ odd and even for $n$ even, and admits the following Binet formula expansion:
    \begin{align*}
        &\qquad \quad P_n(x)=\frac{C}{2}
        \left(\alpha(x)^n + \beta(x)^n\right),
    \end{align*}
where $\alpha(x) = \frac{Ax+\sqrt{(Ax)^2+4B}}{2}$ and $\beta(x) = \frac{Ax-\sqrt{(Ax)^2+4B}}{2}$.
By studying the quadratic character of $A^2x^2+4B$, we obtain sets akin to the hyperbolic, elliptic, and parabolic from above. If we set $A=B=1$, then $P_n(x)=\frac{C}{2}L_n(x)$ and the values for $S_q(P_n)$ are in the set $\left\{\frac{-C}{2},0,\frac{C}{2},C\right\}$.

Another family of interest would be the Fibonacci polynomials, given by the initial conditions $F_1(x) = 1$, $F_2(x) = x$, and the recurrence relation
\[
    F_n(x) = x\cdot F_{n-1}(x) + F_{n-2}(x).
\]
Due to the discrepancy between the indexing conventions on Dickson polynomials versus Fibonacci polynomials, the Fibonacci polynomials of even degree will always be odd, and hence $S_p(F_{2n}) = 0$ for all $p$ and $n$. An investigation of $S_p(F_n)$ at the prime 7 when $n$ is odd displays that sums over residues of Fibonacci polynomials are far less constrained than their Lucas counterparts.
\begin{figure}[H]\label{table3}
\caption{Investigation of $S_7(F_{2n-1})$ for $1\le n\le 40$.}
\par\noindent\rule{0.7\textwidth}{0.4pt} \\
\vspace{0.2em}
\begin{tabular}{c | c}
$n$ & $S_7(F_n)$ \\
\midrule
 1 & 1 \\
 3 & 4 \\
 5 & 3 \\
 7 & 0 \\
 9 & 5 \\
 11 & 3 \\
 13 & 6 \\
 15 & 6
 \end{tabular}\quad
 \begin{tabular}{c | c}
$n$ & $S_7(F_n)$ \\
\midrule
 17 & 1 \\
 19 & 6 \\
 21 & 3 \\
 23 & 0 \\
 25 & 0 \\
 27 & 3 \\
 29 & 6 \\
 31 & 1 
  \end{tabular}\quad
 \begin{tabular}{c | c}
$n$ & $S_7(F_n)$ \\
\midrule
 33 & 6 \\
 35 & 6 \\
 37 & 3 \\
 39 & 5 \\
 41 & 0 \\
 43 & 3 \\
 45 & 4 \\
 47 & 1
 \end{tabular}\quad
 \begin{tabular}{c | c}
$n$ & $S_7(F_n)$ \\
\midrule
 49 & 1 \\
 51 & 4 \\
 53 & 3 \\
 55 & 0 \\
 57 & 5 \\
 59 & 3 \\
 61 & 6 \\
 63 & 6
\end{tabular}\quad \begin{tabular}{c | c}
$n$ & $S_7(F_n)$ \\
\midrule
 65 & 1 \\
 67 & 6 \\
 69 & 3 \\
 71 & 0 \\
 73 & 0 \\
 75 & 3 \\
 77 & 6 \\
 79 & 1
\end{tabular}
\centering
\end{figure}

An interesting direction of research would be to classify these sums in an analogous procedure to that presented in this paper, and begin to characterize the size of the image sets of Fibonacci polynomials modulo $p$. We observe that there is some $(p^2-1)$-fold periodicity in this table which is analogous to that observed for the Dickson polynomials. These values are also palindromic about $\frac{p^2-1}{2}$, which in the Dickson polynomials is explained by replacing $n$ by $p^2-1-n$ in Theorem~\ref{thm:S-sum}.

\color{black}

\bibliographystyle{amsalpha}
\bibliography{citations.bib}{}
\Addresses

\end{document}